\documentclass[11pt,a4paper]{article}
\usepackage[utf8]{inputenc}
\usepackage{amsmath,amsfonts,amssymb}
\usepackage{mathtools}
\usepackage{enumerate}
\usepackage{bm}
\usepackage{upgreek}
\usepackage{rotating}
\usepackage{multicol}
\usepackage{array,multirow}
\usepackage{textcomp}
\usepackage{graphicx}
\usepackage{caption}
\usepackage{algorithm}
\usepackage{algpseudocode}
\usepackage{url,hyperref}
\usepackage{amsthm}
\usepackage[left=2cm,right=2cm,top=2cm,bottom=2cm]{geometry}

\usepackage{xcolor}
\usepackage{subfig}
\usepackage{float}
\usepackage{longtable}
\usepackage{placeins}
\usepackage{todonotes}
\usepackage{titletoc}  %\ttl
\newlength{\defbaselineskip}
\newcommand{\setlinespacing}[1]%
{\setlength{\baselineskip}{#1 \defbaselineskip}}
\theoremstyle{plain}
\newtheorem{thm}{Theorem}[section]

\newtheorem{lem}[thm]{Lemma}
\newtheorem{prop}[thm]{Proposition}

\theoremstyle{definition}
\newtheorem{defn}{Definition}[section]
\newtheorem{ass}{Assumption}[section]
\newtheorem{rmk}{Remark}[section]

\newcommand{\eps}{\varepsilon}

\DeclareMathOperator*{\esssup}{esssup}

\newcommand{\cP}{\mathcal{P}}

\newcommand{\cL}{\mathcal{L}}
\newcommand{\cT}{\mathcal{T}}

\newcommand{\cB}{\mathcal{B}}
\newcommand{\cA}{\mathcal{A}}
\newcommand{\cS}{\mathcal{S}}

\newcommand{\cF}{\mathcal{F}}

\newcommand{\bE}{\mathbb{E}}
\newcommand{\bF}{\mathbb{F}}
\newcommand{\bP}{\mathbb{P}}
\newcommand{\bR}{\mathbb{R}}
\newcommand{\bQ}{\mathbb{Q}}
\newcommand{\bN}{\mathbb{N}}

\makeatletter\@addtoreset{equation}{section} \makeatother

\allowdisplaybreaks
\begin{document}

	\title{A Policy Iteration Scheme for Semilinear Stochastic Hamilton–Jacobi–Bellman Equations with Exponential Convergence\thanks{This work was partially supported by the National Science and Engineering Research Council of Canada (NSERC).}}
	\author{Hasib Uddin Molla\footnotemark[2] \and Jinniao Qiu\footnotemark[2]}
	\maketitle

\footnotetext[2]{Department of Mathematics \& Statistics, University of Calgary, 2500 University Drive NW, Calgary, AB T2N 1N4, Canada. \textit{E-mail}:  
\texttt{mdhasibuddin.molla@ucalgary.ca} (H.U. Molla), \texttt{jinniao.qiu@ucalgary.ca} (J. Qiu).  }

%	\maketitle 
	%\tableofcontents
\begin{abstract}

This paper is concerned with the non-Markovian stochastic optimal control problems 
in which the value function is a random field characterized by a stochastic Hamilton-Jacobi-Bellman (SHJB) equation. 
When the stochastic integration coefficients are not controlled, the SHJB equation takes a semilinear form, which is subject to computational challenges compared to the Markovian case due to the measurable randomness. 
We introduce a policy-iteration algorithm based on successive 
linearization that reduces the nonlinear SHJB equation to a sequence of linear ones. 
Furthermore, we prove that the resulting approximation sequence converges monotonically to the value function in the mean-square sense with an exponential rate. 
\\[4pt]
\textbf{Mathematics Subject Classification (2020):} 49L20, 65K10,  60H15, 60H35, 65M12
\\
\textbf{Keywords: }Non-Markovian stochastic control,  stochastic Hamilton-Jacobi-Bellman equation, backward stochastic partial differential equations, policy iteration. 
\end{abstract}

%%%%%%%%%%%%%%%%%%%%%%%%%%%%%%%%

	\section{Introduction} 
    Let $\big(\Omega, \cF,\bF,\bP \big)$ be a complete filtered probability space satisfying the usual conditions 
    and carrying two independent standard $m$-dimensional Wiener processes $W$ and $\widetilde{W}$ under the probability measure $\bP$. 
    Let $T>0$ be a finite terminal time. 
    We define the sub-filtration $\bF^W=\{\cF^W_t;0\le t \le T\}$ generated by the Brownian motion $W$ and augmented by $\bP$-null sets. 
	    Since $\bF = \{\cF_t;0\le t \le T\}$, clearly, $\cF^W_t \subset \cF_t$ for all $t\in[0,T]$. We also denote by $\cP$ and $\cP^W$ the $\sigma$-algebra of the predictable sets on $\Omega\times [0,T]$ associated with $\{\cF_t;0\le t\le T\}$ and $\{\cF^W_t;0\le t \le T\}$, respectively.

	Let $A\subset \bR^l$ be a nonempty compact set. We restrict attention to feedback controls of the form
    \begin{equation}\label{general_feedback_control_map}
        \alpha_s = \widetilde{\boldsymbol{\alpha}}(s,X_s),
    \end{equation}
    where the feedback control map $\widetilde{\boldsymbol{\alpha}}$ is $\cP^W\times\cB(\bR^d)$-measurable and $A$-valued. 
Denote by $\cA$ the totality of such feedback controls for which, for all $(t,x)\in[0,T)\times \bR^d$, the following SDE admits a unique strong solution $X^{t,x;\alpha}$:
    \begin{equation}\label{SDE-controlledstate}
		\begin{cases}
			dX_s^{t,x;\alpha} &=b^{\alpha_s} (s,X_s^{t,x;\alpha})ds + \sigma(s,X_s^{t,x;\alpha})dW_s + \widetilde{\sigma}(s,X_s^{t,x;\alpha})d\widetilde{W}_s,\; s\in[t,T] , \\
			X_t^{t,x;\alpha} &=x,
	\end{cases}	
    \end{equation}
    where the coefficients $b, \sigma,\widetilde{\sigma}, f$ and $g$ may be random, explicitly depending on $\omega$, and satisfy Assumption \ref{ass:meas_Lipschitz_LinearGrowth}. 
 	We consider the stochastic control problem 
	\begin{equation}\label{control_problem}
	        \esssup_{\alpha \in \cA} J(t,x;\alpha),
	    \end{equation}
that is to maximize the dynamic gain functional
\begin{align}\label{eq:gain_functional}
		J(t,x;\alpha):= \bE^{\bP} \Bigg[\int_t^T f^{\alpha_s}(s,X_s^{t,x;\alpha})ds + G(X_T^{t,x;\alpha})\Big| \cF_t^W\Bigg],
	\end{align}
	for each $(t,x)\in[0,T]\times \bR^d$. 
    For notational simplicity, we suppress the explicit dependence on $\omega$ and write, for instance, $b^a(s,y)$ instead of $b^a(\omega, s,y)$. 
 % By $X^{t,x;\alpha}=\{X^{t,x;\alpha}_s,s\in[t,T]\}$, we denote the $\bF$-adapted solution to SDE \eqref{SDE-controlledstate}, starting from $x$ at time $t$ and governed by the control process $\alpha$. 
 % The conditional expectation in \eqref{eq:gain_functional} with respect to $\cF_t^W$ corresponds to partial information where only $W$ is observable. 
The value function is defined by
		\begin{align}\label{value_fnc}
			v(t,x)=\esssup_{\alpha \in \cA} J(t,x;\alpha), \quad (t,x)\in[0,T]\times \bR^d.
		\end{align}
	  An admissible control $\alpha^*\in \cA$ is called optimal if it achieves the maximum, i.e.,
	\begin{equation}
		J(t,x;\alpha^*)=v(t,x),\quad \text{$\bP$-a.s.,  for all }(t,x)\in [0,T]\times \mathbb R^d.
	\end{equation}

When the coefficients of the control problem are random (exact measurability conditions are specified in Assumption \ref{ass:meas_Lipschitz_LinearGrowth}), the value function $v$ is a random field adapted to the underlying filtration (see \cite{SPeng1992SHJB,JQiu2018ViscosSolSHJBEqn}). 
In such case,  the generalized dynamic programming principle suggests
 that the value function $v$, together with an auxiliary random field $\psi$ as a pair $(v,\psi)$,  satisfies the following stochastic Hamilton-Jacobi-Bellman (SHJB) equation
{\small	\begin{equation}\label{SHJB-main}
		\begin{cases}
			-dv(t,x)& =\left( tr\Big[\frac{1}{2}\big(\sigma\sigma^{\cT}+\widetilde{\sigma}\widetilde{\sigma}^{\cT}\big)D^2_x v+ D_x\psi \sigma^{\cT} \Big]+\sup_{a\in A} \big((b^{a})^{\cT}D_xv+f^{a}\big)\right)(t,x)dt \\
			&\quad - \psi(t,x) dW_t,
                \quad (t,x)\in[0,T)\times \bR^d,\\
			v(T,x) &= G(x), \quad x\in \bR^d,
		\end{cases}
	\end{equation}}
    provided that $v$ and $\psi$ are sufficiently smooth with respect to the spatial variable $x$; see the pioneering work by Peng \cite{SPeng1992SHJB}.  
The verification theorem asserts that, under appropriate regularity conditions, 
a sufficiently smooth solution of the SHJB equation \eqref{SHJB-main} coincides with the value function of the stochastic optimal control problem 
\eqref{control_problem}. Moreover, if the Hamiltonian
\begin{equation}\label{hamiltonian}
    H(t,x,D_xv,a):=\big((b^{a})^{\cT}D_xv+f^{a}\big)(t,x),
\end{equation}
is such that the map $a\mapsto H(t,x,D_xv,a)$ attains its supremum at $\widetilde{\boldsymbol{\alpha}}(t,x)$ $\bP$-almost surely for each $(t,x)\in[0,T]\times \bR^d$, 
and the corresponding (feedback) control  $\alpha^*$ defined as in \eqref{general_feedback_control_map} belongs to the admissible control set $\cA$, then $\alpha^*$ is an optimal control for the control problem \eqref{control_problem}.
Consequently, solving the SHJB equation is fundamental for identifying both the value function 
and the corresponding optimal control.

\subsection{Main Results}
The SHJB equation in \eqref{SHJB-main} takes 
the form of a semilinear backward stochastic partial differential equation (BSPDE). 
The primary objective of this work is to develop an effective method for
 solving such SHJB equations. To this end, 
 we introduce Algorithm~\ref{alg:SHJB_policy_iteration},
 an iterative scheme for solving the semilinear SHJB equation \eqref{SHJB-main}, 
 in which a linear BSPDE is solved at each iteration. The proposed method follows a policy-improvement strategy, 
 in which the value function is successively refined and can be viewed as a stochastic (non-Markovian)  counterpart 
 of Howard's Policy Improvement algorithm (see \cite{Kerimkulov2020HowardPolicyImprovement}).

\begin{algorithm} [h]
\caption{Policy Improvement Algorithm for SHJB Equation}\label{alg:SHJB_policy_iteration}
	\begin{algorithmic}
		\State \textbf{Initialization:} Choose an initial admissible control $a^0(t,x)\equiv \bar a_0 \in A$ and error tolerance $\eps>0$. Set $n=0$.
        \State \textbf{Step 1 (Solve initial Linear BSPDE):} Compute $(v^0,\psi^0)$ as the solution to the linear BSPDE corresponding to $a^0$:
        \begin{equation}\label{SHJB-Algorithm_step0}
			\begin{cases}
				-dv^0(t,x)& =\left[ tr\big[\frac{1}{2}\big(\sigma \sigma^{\cT}+\widetilde{\sigma}\widetilde{\sigma}^{\cT}\big)D^2_x v^0+ D_x\psi^0\sigma^{\cT} \big]+\big((b^{a^0})^{\cT}D_x v^0+f^{a^0}\big)\right](t,x)dt \\
                &\quad - \psi^0(t,x) dW_t ,\\
				v^0(T,x) &= G(x).
			\end{cases}
		\end{equation}
        \State \textbf{Step 2 (Iterative Improvement):} For $n=1,2,\cdots $ do:
            \begin{enumerate}
                \item \textbf{Policy Improvement:} Define $a^{n}$ by
                \begin{equation}
                    a^{n}(t,x)=\arg\max_{a\in A} \Big(\big((b^a)^{\cT}D_xv^{n-1}+f^a\big)(t,x)\Big).
                \end{equation}
                \item \textbf{Policy Evaluation:} Compute $(v^{n},\psi^{n})$ as the solution to the linear BSPDE with control $a^{n}$:
        \begin{equation}\label{SHJB-Algorithm}
			\begin{cases}
				-dv^{n}(t,x)& =\Big[ tr\big[\frac{1}{2}\big(\sigma \sigma^{\cT}+\widetilde{\sigma}\widetilde{\sigma}^{\cT}\big)D^2_x v^{n}+ D_x\psi^{n}\sigma^{\cT} \big]\\
                &\quad +\big( (b^{a^{n}})^{\cT}D_x v^{n}+f^{a^{n}}\big)\Big](t,x)dt - \psi^{n}(t,x) dW_t ,\\
				v^{n}(T,x) &= G(x).
			\end{cases}
		\end{equation}
                \item \textbf{Stopping Criterion:} If 
                \begin{equation}
                    \sup_{t\in [0,T]} \esssup_{x\in \bR^d} 
                    \bE\Big[\big|v^{n}(t,x)-v^{n-1}(t,x)\big|^2\Big]\leq \eps,
                \end{equation}
                then stop and output $(a^{n},v^{n},\psi^{n})$.
            \end{enumerate}
	\end{algorithmic}
\end{algorithm}

\noindent
The convergence of the sequence $(v^n)_{n\in\bN}$ generated by Algorithm~\ref{alg:SHJB_policy_iteration} toward the value function $v$ is established in the following theorem. The proof is deferred to Section \ref{sec:Theoretical_results_and_proofs}. 

% \begin{align*}
% dX_t &=b^{a^n(t)}(t,X_t)dt + \sigma_t dW_t + \tilde \sigma d\tilde W_t;\\
% -dY^n_t &= f^{a^n(t)}(t,X_t) dt -Z_t dW_t - \tilde Z_t d\tilde W_t;\\
% \end{align*}

\begin{thm}\label{thm:convergence_value_function}
Let Assumptions \ref{ass:meas_Lipschitz_LinearGrowth}--\ref{ass:feedback_control_lipschitz} hold. 
Let the pair $\big(v, \psi\big)\in \big(\cL_{\cP^W}^2(\bP, H^{2,2})\cap \cS_{\cP^W}^2(\bP, H^{1,2})\big)\times \cL_{\cP^W}^2(\bP, H^{1,2})$ 
be the unique strong solution to the SHJB equation \eqref{SHJB-main} and for each $n\in\bN$, 
denote by $\big(v^n, \psi^n\big)\in \big(\cL_{\cP^W}^2(\bP, H^{2,2})\cap \cS_{\cP^W}^2(\bP, H^{1,2})\big)\times \cL_{\cP^W}^2(\bP, H^{1,2})$
 the unique strong solution to the linear SHJB equation \eqref{SHJB-Algorithm}
  at the $n$-th iteration of the algorithm.
Then there exists a constant $q\in (0,1/2)$ depending only on $\theta,\gamma, K$, and $T$ such that, for almost all $(t,x)\in[0,T]\times \bR^d$ and every $n\in\bN$, 
\begin{align}
    \bE \Big[|v^n(t,x)-v(t,x)|^2\Big]\leq C q^n,
\end{align}
where the constant $C$ depends only on $\theta, K, \gamma$, and $T$.
\end{thm}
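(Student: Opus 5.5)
The plan is to compare each iterate with the value function through the linear BSPDE satisfied by the error $e^n:=v-v^n$, and then to turn the one-sided Hamiltonian inequality coming from the policy-improvement step into a contraction estimate. \textbf{First}, I would establish monotonicity, $v^{n-1}\le v^n\le v$ a.s. Write $\mathcal{L}u:=tr[\frac12(\sigma\sigma^{\cT}+\widetilde\sigma\widetilde\sigma^{\cT})D^2_xu]$ and $H(a,p):=(b^a)^{\cT}p+f^a$. The difference $\delta^n:=v^n-v^{n-1}$ solves the linear BSPDE
\begin{equation*}
-d\delta^n=\Big[\mathcal{L}\delta^n+tr\big(D_x(\psi^n-\psi^{n-1})\sigma^{\cT}\big)+(b^{a^n})^{\cT}D_x\delta^n+H(a^n,D_xv^{n-1})-H(a^{n-1},D_xv^{n-1})\Big]dt-(\psi^n-\psi^{n-1})dW_t,
\end{equation*}
with $\delta^n(T)=0$. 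By the definition of $a^n$ as the maximiser, the source term is nonnegative. The comparison principle for linear BSPDEs, or equivalently the Feynman--Kac representation $v^n(t,x)=J(t,x;a^n)$ along the state process driven by $a^n$, then gives $\delta^n\ge0$. The same argument applied to $v-v^n$, with source $\sup_aH(a,D_xv)-H(a^n,D_xv)\ge0$, gives $v^n\le v$. Hence $e^n\ge0$.

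\textbf{Second}, I would derive the error equation and bound its source. The error $e^n$ solves
\begin{equation*}
-de^n=\Big[\mathcal{L}e^n+tr\big(D_x(\psi-\psi^n)\sigma^{\cT}\big)+(b^{a^n})^{\cT}D_xe^n+R^n\Big]dt-(\psi-\psi^n)dW_t,
\end{equation*}
with $e^n(T)=0$ and $R^n:=H(a^*,D_xv)-H(a^n,D_xv)\ge0$, where $a^*$ is the maximiser for $D_xv$. Since $H(a^n,D_xv^{n-1})\ge H(a^*,D_xv^{n-1})$, the remainder satisfies
\begin{equation*}
0\le R^n\le (b^{a^*}-b^{a^n})^{\cT}D_xe^{n-1}\le 2K|D_xe^{n-1}|.
\end{equation*}
If Assumption \ref{ass:feedback_control_lipschitz} gives Lipschitz dependence of the argmax on the gradient with constant $\gamma$, this sharpens to $R^n\le K\gamma|D_xe^{n-1}|^2$. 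So the $n$-th error is driven linearly by the gradient of the previous error, which is what produces a geometric rate.

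\textbf{Third}, I would run an energy (It\^o) estimate on $e^{\lambda t}\|e^n(t)\|^2_{H^{1,2}}$. I would use the parabolicity constant $\theta$ in Assumption \ref{ass:meas_Lipschitz_LinearGrowth} to absorb the $D_x\psi$ cross term and the gradient terms, and Young's inequality on the source term. This should yield
\begin{equation*}
\sup_t\bE\big[e^{\lambda t}\|e^n(t)\|^2\big]+\theta\,\bE\int_0^Te^{\lambda s}\|D_xe^n\|^2ds\le \frac{C(K)}{\lambda}\,\bE\int_0^Te^{\lambda s}\|D_xe^{n-1}\|^2ds,
\end{equation*}
with the analogous statement one derivative higher, using the $H^{2,2}$ regularity of the iterates. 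Choosing the weight $\lambda=\lambda(\theta,K,\gamma,T)$ large makes the factor $q:=C/(\lambda\theta)<1/2$. Iterating gives $\|e^n\|\le C q^n$, with $C$ controlled by the a priori bound on $e^0$. An alternative route is to iterate the time-integrated inequality directly, which gives $(CT)^n/n!$, and this is eventually below $q^n$ anyway.

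\textbf{The main obstacle} is passing from these integral (spatial Sobolev-norm) bounds to the pointwise-in-$x$ mean-square bound stated in Theorem \ref{thm:convergence_value_function}. A plain $H^{1,2}$ estimate does not control point values in general dimension $d$. My first attempt would be the probabilistic route: write
\begin{equation*}
e^n(t,x)=\bE\Big[\int_t^T R^n(s,X^{t,x;a^n}_s)\,ds\,\Big|\,\cF^W_t\Big],
\end{equation*}
then use Jensen's inequality together with $R^n\le2K|D_xe^{n-1}|$. This reduces the problem to an $L^\infty_x$ mean-square bound on $D_xe^{n-1}$. I would obtain that bound from a Bismut--Elworthy-type gradient formula, which uses nondegeneracy $\theta$ and Lipschitz coefficients, giving an inequality of the form $\sup_x\bE|D_xe^n(t,x)|^2\le C\int_t^T(s-t)^{-1/2}\sup_x\bE|D_xe^{n-1}(s,x)|^2ds$. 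Iterating this weakly singular Volterra inequality with an exponential time weight again produces the contraction factor $q<1/2$. Should this fail, the fallback is to carry out the energy estimates in $H^{m,2}$ with $m>d/2$ and use Sobolev embedding.
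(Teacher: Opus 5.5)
\textbf{There is a genuine gap: the pointwise-in-$x$ step does not go through.} Your first two steps are fine. The error equation is right, and so is the one-sided bound $0\le R^n\le (b^{a^*}-b^{a^n})^{\cT}D_xe^{n-1}\le 2K|D_xe^{n-1}|$; monotonicity is not needed for this theorem. The weighted energy estimate in your third step does give geometric decay, but only in spatial $L^2(\bR^d)$-type norms. (The parabolicity constant is $\gamma$ from Assumption \ref{ass:nondegeneracy/parabolicity}; $\theta$ is the Lipschitz constant of $\boldsymbol{\alpha}$.) The theorem asserts a bound on $\bE|v^n(t,x)-v(t,x)|^2$ uniformly in $x$, and none of your proposed routes delivers it.

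\emph{The Bismut--Elworthy--Li route.} The inequality $\sup_x\bE|D_xe^n(t,x)|^2\le C\int_t^T(s-t)^{-1/2}\sup_x\bE|D_xe^{n-1}(s,x)|^2ds$ cannot be obtained this way. In the non-Markovian setting, $D_xe^{n-1}(s,\cdot)$ is a random field adapted to $W$, and $X_s$ is driven by the same $W$ and by $W$-adapted coefficients. So $\sup_y\bE|D_xe^{n-1}(s,y)|^2$ does not control $\bE|D_xe^{n-1}(s,X_s)|^2$. You would need $\esssup_{\omega,x}$ bounds on every $D_xe^{n}$, which you do not have. There is a second problem: you represent $e^n$ along $X^{t,x;a^n}$, but $a^n=\boldsymbol{\alpha}(\cdot,\cdot,\widetilde\sigma^{\cT}D_xv^{n-1})$ is only measurable in $x$. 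That state equation is not known to have a strong solution, since Assumption \ref{ass:feedback_control_lipschitz} covers only $\alpha^*$. Its flow is not differentiable, so no Bismut--Elworthy--Li formula is available. And the path changes with $n$, so estimates for different iterates cannot be chained.

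\emph{The Sobolev fallback.} The assumptions give only $v^n,v\in\cS^2_{\cP^W}(\bP,H^{1,2})$ at fixed times. The drift $b^{a^n(\cdot)}(\cdot)$ of the $n$-th linear equation is merely bounded and measurable in $x$. So there is no $H^{m,2}$ regularity with $m>d/2$ for general $d$.

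\textbf{The missing idea is to keep every iterate on one fixed trajectory.} Take the $\alpha^*$-controlled state $X=X^{t,x}$, which is admissible by Assumption \ref{ass:feedback_control_lipschitz}, for all $n$. Pass to the measure $\bQ$ of Proposition \ref{change_measure}, which removes its drift. By the It\^o--Kunita--Wentzell formula, $e^n(s,X_s)$ then solves under $\bQ$ a linear BSDE with:
\begin{itemize}
\item zero terminal value;
\item $\widetilde B$-integrand $\widetilde Z^{e,n}_s=(\widetilde\sigma^{\cT}D_xe^n)(s,X_s)$;
\item driver $(\widetilde\sigma^{+}b^{a^n})^{\cT}(s,X_s)\widetilde Z^{e,n}_s+R^n(s,X_s)$.
\end{itemize}
Since $|D_xe|\le\gamma^{-1/2}|\widetilde\sigma^{\cT}D_xe|$, your own bound gives $|R^n(s,X_s)|\le 2K\gamma^{-1/2}|\widetilde Z^{e,n-1}_s|$.

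The $e^{\lambda s}$-weighted It\^o estimate for this BSDE, with $\lambda$ large, gives $\bE^{\bQ}[\int_\tau^Te^{\lambda s}|\widetilde Z^{e,n}_s|^2ds\,|\,\cF_\tau]\le q\,\bE^{\bQ}[\int_\tau^Te^{\lambda s}|\widetilde Z^{e,n-1}_s|^2ds\,|\,\cF_\tau]$ for some $q<1/2$. Iterating, $|e^n(t,x)|^2\le q^n\,\bE^{\bQ}[\int_t^Te^{\lambda s}|\widetilde Z^{e,0}_s|^2ds\,|\,\cF_t]$. The gradient along the path is thus controlled by the BSDE itself. The only sup-norm information needed is the a priori bound \eqref{value_fnc_gradient_bound} on $D_xv^0$ and $D_xv$, together with $|\widetilde\sigma|\le K$; this bounds the right-hand side uniformly in $x$.

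This is the paper's argument. There it is phrased as a comparison of the drivers $F_s(\widetilde Z^{n-1}_s,\widetilde Z^n_s)$ and $F_s(\widetilde Z_s,\widetilde Z_s)$, using the Lipschitz property of $\boldsymbol{\alpha}$ and the boundedness of $\widetilde Z$. Your one-sided bound on $R^n$ would make that Lipschitz step unnecessary.
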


Furthermore, we establish the monotonicity of the sequence $(v^n)_{n\in\bN}$ generated by Algorithm~\ref{alg:SHJB_policy_iteration} in the subsequent Proposition \ref{prop:monotonicity_value_function}. 
\subsection{Literature,  contributions, and organization of the paper} 
In comparison to the Markovian case where the value function is deterministic and satisfies a deterministic HJB equation (see  \cite{fleming2006controlled,YZ99} for instance), 
the SHJB equations, arising from non-Markovian stochastic controls with random coefficients, are a class of backward stochastic partial differential equations (BSPDEs). In general, when the stochastic-integral coefficients of the controlled state equations 
depend on controls, the resulting SHJB equation is a fully nonlinear BSPDE, while the SHJB equation takes a semilinear form when the stochastic-integral coefficients are independent of controls; see \cite{EnglezosKaratzas09,Hu_Ma_Yong2002SemiLinBSPDE,SPeng1992SHJB,JQiu2018ViscosSolSHJBEqn,YangTang2013SdeRandomCoeff_BSPDE}.
Indeed, the linear and semilinear BSPDEs arise in many applications of probability theory and stochastic processes, 
for instance, in the stochastic control theory for processes with incomplete information, 
as the adjoint equations of the Duncan-Mortensen-Zakai filtering equation (see \cite{Bernsoussan1983MaxPrincDynmProgrmOptimCont, ZhouX1993OptimalControlforSPDEs,TangS1998MaxPrincplePartiallyObservedControlSDE} among many others). 
In the mean-field game systems with common noise, certain classes of BSPDEs are used to characterize the value function of the optimization problem (see, for example, \cite{Carmona2014MasterEqnlargPoplEquilbrm, CardaliaguetetelMFG2015}).
The representation relationship between forward-backward stochastic differential equations (FBSDEs) and BSPDEs yields the so-called stochastic Feynman-Kac formula (see \cite{CBayerJQiuYao2022RoughVolBSPDEs,Hu_Ma_Yong2002SemiLinBSPDE, KaiDuQiZhang2013semiLiniearDegnBSPDEandFBSDE, JMaHYinJZhang2012non-markovianFBSDE}).
In addition, as the obstacle problems of BSPDEs, the reflected BSPDE arises as the SHJB equation for the optimal stopping problems (see \cite{qiu2014quasi}).

The study of BSPDEs dates back to more than forty years ago. The linear, semilinear, and even quasilinear BSPDEs
have been extensively studied; for Sobolev space-valued solutions in whole spaces or domains, we refer to \cite{DuQiuTang2012LpTheoryBSPDEinWholeSpace,Hu_Ma_Yong2002SemiLinBSPDE, KaiDuQiZhang2013semiLiniearDegnBSPDEandFBSDE,JMaHYinJZhang2012non-markovianFBSDE,qiu2012maximum,ZhouX1993OptimalControlforSPDEs} among many others, and for solutions 
in H{\"o}lder spaces, see \cite{CardaliaguetetelMFG2015,tang2016cauchy}. 
For the fully nonlinear cases, one may refer to \cite{JQiu2017WeakSolnNonlinSHJBEqn} for weak solution in Sobolev spaces for a special class of fully nonlinear
stochastic HJB equations, to \cite{ekren2016viscosity} for the notion of viscosity solution for path-dependent PDEs and thus for SHJB equations when the $\omega$-dependence (randomness) is through continuous path-dependence, and to \cite{JQiu2018ViscosSolSHJBEqn} for the notion of viscosity solution 
for general fully nonlinear SHJB equations as first introduced in \cite{SPeng1992SHJB}.

In many applications, numerical approximations of the SHJB equations are indispensable, as closed-form analytical solutions are generally unavailable. The numerical approximation of BSPDEs, including SHJB equations, is substantially more challenging than that of deterministic PDEs with terminal conditions or forward SPDEs for two principal reasons. First, the integrand of the stochastic integral  ($\psi$ in \eqref{SHJB-main})  is endogenous and must be determined as part of the solution. Second, the backward temporal evolution introduces conditional expectations at each time step, thereby significantly increasing both the analytical complexity and the computational cost of numerical implementations. 
By contrast, robust numerical methods based on temporal and spatial discretization have been extensively developed for deterministic PDEs and forward SPDEs; see, for example, \cite{AJentzen_PEKloeden_2009_NumerApproxSPDEs,PaoLiuChow2014SPDEs} and the references therein for comprehensive surveys of the underlying theory and numerical methodologies. Additionally, we refer to \cite{cheridito2007second,fahim2011probabilistic} and \cite{tan2014discrete} for numerical probabilistic methods for fully nonlinear parabolic PDEs without and with path-dependence respectively.

In a pioneering contribution, Dunst and Prohl \cite{Dunst2016FBStocHeatEqn} investigated a special class of linear coupled forward-backward SPDEs and established the convergence results for a finite element spatial discretization. 
For the temporal discretization, they employed the implicit Euler scheme, 
while the resulting conditional expectations were approximated using the least-squares Monte Carlo method 
in conjunction with either Picard iterations or stochastic gradient algorithms. 
In recent years, deep learning-based methods have emerged as an effective tool for the numerical approximation of nonlinear PDEs, particularly the deterministic HJB equations arising from Markovian stochastic controls. These approaches exploit the representation of the underlying PDEs via backward stochastic differential equations (BSDEs) (see, for instance, \cite{CBeckWEAJentzen2019MLforNonlinPDEAndBSDE, WEMHutAJentzTKurse2019MultiPicardNonlinPDEsandBSDE, HurePhamWarin2020DeepBSchemePDE}). Motivated by their empirical success in mitigating the curse of dimensionality and their flexibility in handling nonlinear problems, these Deep-BSDE-type algorithms have more recently been extended to the numerical approximation of BSPDEs; see \cite{CBayerJQiuYao2022RoughVolBSPDEs,HUMollaJQiu20221NumercFBSDE}. 
Another widely adopted approach for the numerical approximation of forward SPDEs is the splitting-up method, which decomposes the original problem into two simpler subproblems, typically consisting of a deterministic PDE and an SDE; see, for example, \cite{ABensoussanGlowinskiRascanu_1990_ApproxZakaiSplittinUp,ABensoussanGlowinskiRascanu_1992_ApproxSPDESplittinUp,CBeckSBeckerPCherAJentANeu2025DeepLearnAlgoSPDE}. This splitting strategy was subsequently extended to BSPDEs by Li and Tang \cite{YLiSTang_2021_ApproxBSPDESplittinUp}, where the BSPDE is decomposed into a backward deterministic PDE and a BSDE. For a class of linear BSPDEs, the authors established a first-order convergence rate for the resulting scheme.

Linearization through iterative procedures is a classical approach for the analysis and numerical solution of nonlinear equations. By reformulating a nonlinear problem as a sequence of linear equations, such methods often yield significant analytical and computational advantages. 
In stochastic control, policy (or control) iteration methods have been extensively studied for Markovian problems under various settings; see, for example \cite{Kerimkulov2020HowardPolicyImprovement,FahimRahman2025DeepPolicyControl} and the references therein. 
In particular, Kerimkulov et al. \cite{Kerimkulov2020HowardPolicyImprovement} employ a Howard-type policy improvement scheme to linearize the semilinear HJB equation, resulting in an iterative solution procedure based on a sequence of linear PDEs. In contrast,  Fahim and Rahman \cite{FahimRahman2025DeepPolicyControl} develop a policy-gradient approach that directly addresses the continuous-time stochastic control problem without explicitly solving the associated HJB equation.

To date, policy-iteration methods have been confined 
primarily to Markovian stochastic control problems, 
with no corresponding framework available for SHJB equations arising 
in the non-Markovian setting. 
In this work, we develop a policy-iteration scheme for the SHJB equation \eqref{SHJB-main}, 
in which the original nonlinear equation is successively 
approximated by a sequence of linear ones. 
We show that the resulting approximation sequence $(v^n(t,x))$ 
converges to the value function $v(t,x)$ in the mean-square sense 
for all $(t,x)$, with an exponential rate $q^n$, where $q\in(0,1/2)$. 
Furthermore, we establish a monotone improvement property of the iterates 
along the controlled state process, providing a rigorous theoretical foundation 
for the policy-improvement interpretation of the proposed scheme.

Beyond its theoretical appeal, the proposed linearization offers significant computational 
advantages. Each iteration requires only the resolution of a linear BSPDE, 
enabling the direct application of a broad range of existing numerical methods, 
including BSDE-based Monte Carlo regression, operator-splitting techniques, 
and recent deep learning approaches. 
Consequently, the proposed framework serves as a natural interface between 
policy iteration and established numerical solvers for linear BSPDEs. 
By replacing the original nonlinear SHJB equation with a sequence of linear subproblems, 
it substantially simplifies the computational complexity of each iteration while 
retaining rigorous convergence guarantees. To the best of our knowledge, 
this is the first work to establish a policy-iteration framework for SHJB equations 
together with rigorous convergence analysis 
and a computational strategy that is compatible with 
existing numerical methods for linear BSPDEs.

The rest of the paper is organized as follows. 
Section \ref{sec:Preliminaries} introduces the notations, 
states the standing assumptions, and recalls several auxiliary results. 
In Section \ref{sec:Theoretical_results_and_proofs}, we derive 
the BSDE representation for the semilinear SHJB equation 
and its linearized counterpart, and establish the convergence of the proposed policy-iteration algorithm. 
Finally, Section \ref{sec:conclusion} concludes the paper.

%%%%%%%%%%%%%%%%%%%%%%%%%%%
	
\section{Preliminaries}\label{sec:Preliminaries}
\subsection{Notations}

Let $\bN$ denote the set of strictly positive integers, set
$\bN_0:=\bN\cup \{0\}$, and write $|\cdot|$ and $\cdot$ for the Euclidean norm and inner product respectively. 
For a nonempty domain $D\subset \bR^d$, $d,l\in \bN$, and $p\in[1,\infty]$, 
we use $A^{\cT}$ to represent the transpose of a matrix $A\in \bR^{d\times l}$ and denote by $L^p(D;\bR^l)$ the space of $\bR^l$-valued $p$-th Lebesgue-measurable functions. 
In particular, the inner product and norm on $L^2(D;\bR^l)$ are given by
	$$
	\langle \phi,\,\psi\rangle=\int_{D}\phi(x)\cdot \psi(x)\,dx,\qquad \|\phi\|=\langle\phi,\,\phi\rangle^{1/2},
	\quad \text{for }\phi,\psi\in L^2(D;\bR^l).$$

	\noindent For $n\in\bN_0$ and $p\in \left[1,\infty\right]$, we denote by $(H^{n,p}(D;\bR^l),\|\cdot\|_{n,p})$ the Sobolev space
	$$H^{n,p}(D;\bR^l)=\Big\{u\in L^p(D;\bR^l):D^{\alpha}u\in L^p(D;\bR^l),\ |\alpha|\le n\Big\},$$
    where, for a multi-index $\alpha=(\alpha_1,\cdots,\alpha_d)\in \bN_0^d$, $|\alpha|=\sum_{i=1}^d\alpha_i$ and $D^{\alpha}u=\frac{\partial^{|\alpha|}u}{\partial x_1^{\alpha_1}\cdots \partial x_d^{\alpha_d}}$ denotes the corresponding weak derivative. 
    By convention, for sufficiently differentiable functions $\phi:\bR^d\rightarrow \bR^l$ and $\psi:\bR^d \rightarrow \bR$, 
     we use $D_x^{\cT}\phi$, valued in $\bR^{l\times d}$ and $D^2_{x}\psi$, valued in $\bR^{d\times d}$, to denote the Jacobian and Hessian matrices of $\phi$ with respect to the spatial variable $x$, respectively; obviously, $D_x\phi$, the transpose of Jacobian matrix, is valued in $\bR^{d\times l}$.
% , and let $(\Omega,\cF, \bP)$ be a probability space. 

Let $V$ be a Banach space with norm $\|\cdot\|_V$. For $t\in[0,T]$, let $L^2_{\cF_t}(\bP,V)$ denote the space of all $\cF_t$-measurable $V$-valued random variables $X$ satisfying
$$\|X\|_{L^2_{\cF_t}(\bP,V)}:=\Big(\bE^{\bP}\big[\|X\|_V^2\big]\Big)^{1/2}< \infty.$$
The corresponding $L^{\infty}$ space is denoted by $L^\infty_{\cF_t}(\bP,V)$ and is equipped with the essential supremum norm
\[
\|X\|_{L^\infty_{\cF_t}(\bP,V)}
:=
\operatorname*{ess\,sup}_{\omega\in\Omega}\|X(\omega)\|_V.
\]
For $\lambda\geq 0$, let $\cL^{2}_{\cP,\lambda}(\bP,V,[0,T])$ denote the space of all $\cP$-measurable $V$-valued stochastic processes $\{X_{t}\}_{t\in [0,T]}$ satisfying
\begin{equation*}
	\|X\|_{\cL_{\cP,\lambda}^2(\bP,V,[0,T])}:=\bigg(\bE^{\bP}\left[\int_{0}^{T}e^{\lambda t}\|X_t\|^2_V d t\right]\bigg)^{\frac{1}{2}}<\infty.
\end{equation*}
Similarly, we write $\cL^{\infty}_{\cP,\lambda}(\bP,V,[0,T])$ for the corresponding space endowed with the norm
\begin{equation*}
	\|X\|_{\cL^{\infty}_{\cP,\lambda}(\bP,V,[0,T])}
	:=
	\operatorname*{ess\,sup}_{(\omega,t)\in\Omega\times[0,T]}
	e^{\lambda t}\|X_t(\omega)\|_V, 
\end{equation*}
where the essential supremum is taken with respect to the product measure $\bP\otimes dt$. 
When $\lambda=0$, we use the simplified notation $(\cL_\cP^2(\bP,V),\|\cdot\|_{\cL_\cP^2(\bP,V)})$. 
Moreover, for every $\lambda\geq 0$, the norms $\|\cdot\|_{\cL_\cP^2(\bP,V)}$ and $\|\cdot\|_{\cL^2_{\cP,\lambda}(\bP,V)}$ are equivalent.

Finally, we denote by $\cS_\cP^2(\bP,V,[0,T])$ the space of all continuous, $\cP$-measurable, $V$-valued stochastic processes $\{X_{t}\}_{t\in [0,T]}$ satisfying
\begin{equation*}
	\|X\|_{\cS_\cP^2(\bP, V,[0,T])}:=\Bigg(\bE^{\bP}\bigg[\sup_{t\in [0,T]}\|X_t\|^2_V\bigg]\Bigg)^{1/2}<\infty.
\end{equation*}
It is standard that $\left(\cS_\cP^2(\bP, V),\,\|\cdot\|_{\cS_\cP^2(\bP, V)}\right)$ and $\left(\mathcal{L}_{\cP,\lambda}^2(\bP, V),\|\cdot\|_{\mathcal{L}_{\cP,\lambda}^2(\bP, V)}\right)$
are Banach spaces.

To simplify notation, arguments are occasionally omitted when no ambiguity can arise. For instance, we write $H^{n,p}$ or $H^{n,p}(\bR^l)$ in place of $H^{n,p}(D;\bR^l)$ whenever the underlying domain and dimension are clear from the context. %Also, we will use $\cL^p(\cdot)$ instead of $\cL^p_0(\cdot)$.

%%%%%%%%%%%%%%%
%%%%%%%%%%%%%%%%%%%%%%%%%%%%%%%%%%%%%%%%%%%%%%%%%%%%%%%%%%%%%%%%%%%%%%%%%%%%%%%%%%%%

\subsection{Assumptions and Auxiliary Results}
We begin by stating the standing assumptions on the coefficients and recalling several auxiliary results that will be used throughout the paper. These assumptions ensure the well-posedness of the controlled state equation \eqref{SDE-controlledstate}, the stochastic control problem \eqref{control_problem} and the associated SHJB equation \eqref{SHJB-main}. They also provide the regularity required for the convergence analysis of the proposed policy iteration scheme.

% We also introduce the associated feedback control map and summarize several technical tools that will be used in the subsequent analysis.\\

\subsubsection{Well-posedness of the Controlled SDE}
% First, we introduce the measurability assumptions on the coefficients, followed by the Sobolev regularity assumptions. 

 The following assumptions ensure the well-posedness of the controlled state equation \eqref{SDE-controlledstate}.

\begin{ass}\label{ass:meas_Lipschitz_LinearGrowth}
     \begin{enumerate} 
      \item (\textit{Measurability})  The coefficients $b, f$ are $\cP^W\otimes \cB(\bR^d) \otimes \cB(A)$-measurable and take values in $\bR^d$ and $\bR$ respectively. The coefficients $\sigma,\widetilde{\sigma}$ are $\cP^W\otimes \cB(\bR^d)$-measurable with values in $\bR^{d \times m}$ and the terminal function $G(x)\in \bR$ is $\cF^W_T$-measurable for each $x\in \bR^d$. 
        
         \item (\textit{Lipschitz continuity and Linear growth}) There exist positive constants $K$ and $\theta$ such that for all $a,\hat a\in A$, $t\in[0,T]$; and for any $x,y\in \bR^d$,
 	\begin{align*}
		&|b^{a}(t,x)-b^{\hat a}(t,y)|+|\sigma(t,x)-\sigma(t,y)|+|\widetilde{\sigma}(t,x)-\widetilde{\sigma}(t,y)|+|f^{a}(t,x)-f^{\hat a}(t,y)| 
        \\
        &\le K|x-y| + \sqrt{\theta}|a-\hat a|,
	\end{align*}
 and 
 \begin{equation*}
      |\sigma(t,x)|+|\widetilde{\sigma}(t,x)|\le K (1+|x|),\qquad |b^{a}(t,x)|+|f^{a}(t,x)|\le K(1+|x|+|a|)
 \end{equation*}
 $\bP$-almost surely. Moreover, for any $x,y\in\bR^d$,  
  \begin{equation*}
      |G(x)-G(y)|\leq K|x-y| \quad \bP\text{-a.s.}
 \end{equation*}
  \end{enumerate} 
 \end{ass}

\begin{lem}[Existence and uniqueness of strong solution to SDE]\label{wellp:FSDE}
	Assume that the coefficients $b,\sigma$ 
    and $\widetilde{\sigma}$  of the SDE \eqref{SDE-controlledstate} satisfy Assumption \ref{ass:meas_Lipschitz_LinearGrowth}. Further assume that $x$ is $\cF_t$-measurable random variable with $\bE^\bP[|x|^p]<\infty$ for some $p>1$ such that $X_t=x$. 
    Then for any $A$-valued $\bF$-progressively measurable control process $\alpha$ whose associated feedback map $\widetilde{\boldsymbol{\alpha}}(s,\cdot)$ 
    defined in \eqref{general_feedback_control_map} is uniformly Lipschitz continuous and of linear growth for every $s\in[t,T]$, 
    the controlled SDE \eqref{SDE-controlledstate} admits a unique strong solution $X^{t,x;\alpha}$. Moreover, there exists a constant $C$ depending on $p,T$ and $K$, such that 
	\begin{equation}
			\bE^\bP \Bigg[\sup_{s\in[t,T]}\Big|X_s\Big|^p \Bigg]< C\Big(1+\bE^\bP[|x|^p]\Big).
	\end{equation}
 \end{lem}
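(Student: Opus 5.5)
The statement to prove is Lemma~\ref{wellp:FSDE}, the well-posedness of the controlled SDE \eqref{SDE-controlledstate} under a Lipschitz feedback. The approach is to reduce to the classical theory of SDEs with random Lipschitz coefficients.

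\textbf{Step 1: Reduce to a closed-loop SDE with Lipschitz drift.} Fix the feedback map $\widetilde{\boldsymbol{\alpha}}$ and define the closed-loop drift
\[
\bar b(s,y):=b^{\widetilde{\boldsymbol{\alpha}}(s,y)}(s,y).
\]
By Assumption~\ref{ass:meas_Lipschitz_LinearGrowth}, combined with the Lipschitz constant $L$ and linear growth of $\widetilde{\boldsymbol{\alpha}}(s,\cdot)$, we get
\[
|\bar b(s,x)-\bar b(s,y)|\le K|x-y|+\sqrt{\theta}\,L|x-y|.
\]
Growth is handled using that $A$ is compact, so $|a|$ is bounded:
\[
|\bar b(s,x)|\le K(1+|x|+\sup_{a\in A}|a|).
\]
Measurability of $\bar b$ follows because it is a composition of a $\cP^W\otimes\cB(\bR^d)\otimes\cB(A)$-measurable map with a $\cP^W\otimes\cB(\bR^d)$-measurable map; hence it is $\cP\otimes\cB(\bR^d)$-measurable. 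The coefficients $\sigma,\widetilde\sigma$ are already Lipschitz with linear growth.

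\textbf{Step 2: Existence and uniqueness by Picard iteration.} Work on $\cS^2_{\cP}(\bP,\bR^d,[t,T])$ and set $X^0\equiv x$ and
\[
X^{k+1}_s=x+\int_t^s\bar b(r,X^k_r)\,dr+\int_t^s\sigma(r,X^k_r)\,dW_r+\int_t^s\widetilde\sigma(r,X^k_r)\,d\widetilde W_r .
\]
Using the Burkholder--Davis--Gundy and Cauchy--Schwarz inequalities with the uniform (in $\omega$) Lipschitz constants gives
\[
\bE\sup_{r\le s}|X^{k+1}_r-X^k_r|^2\le C\int_t^s\bE\sup_{u\le r}|X^k_u-X^{k-1}_u|^2\,dr.
\]
Iterating yields a $C^k(s-t)^k/k!$ bound, so the sequence converges to a strong solution. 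Uniqueness comes from the same estimate together with Gronwall's lemma.

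When $x$ only has a $p$-th moment with $p<2$, the $L^2$ argument does not apply directly. In that case run the argument in $L^p$, using BDG for $p>1$ and the same Lipschitz bounds. Alternatively, localize on $\{|x|\le N\}\in\cF_t$: by pathwise uniqueness the solutions for different $N$ agree on the overlapping events, and pasting them gives the solution.

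\textbf{Step 3: Moment estimate.} Apply BDG with exponent $p$ and the linear growth bounds to get
\[
\bE\sup_{r\le s}|X_r|^p\le C\Big(\bE|x|^p+1+\int_t^s\bE\sup_{u\le r}|X_u|^p\,dr\Big).
\]
For $p\ge 2$ this follows from H\"older's inequality in time. For $1<p<2$, BDG gives $\bE\big(\int|\sigma|^2dr\big)^{p/2}$, which is controlled by $\bE\big[\sup|X|^{p/2}\cdot(\int(1+|X|)^{p}\ldots)\big]$; Young's inequality then absorbs half of the supremum into the left-hand side. To keep the left-hand side finite before applying Gronwall, use the stopping times $\tau_N=\inf\{s:|X_s|\ge N\}$. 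Gronwall's lemma and Fatou's lemma then give the bound with $C=C(p,T,K)$; the dependence on $\theta$ and $L$ is not needed here because only growth enters.

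\textbf{Main obstacle.} The only delicate part is the case $1<p<2$ in Step 3, where the Young absorption trick is needed. Everything else is standard.
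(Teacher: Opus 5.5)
Your proposal is correct and follows essentially the same route as the paper. The paper gives no proof of its own and simply appeals to the standard strong well-posedness and moment theory for SDEs with random Lipschitz coefficients (Pham's book). You reduce to exactly that setting via the closed-loop drift $\bar b(s,y)=b^{\widetilde{\boldsymbol{\alpha}}(s,y)}(s,y)$ and then reproduce the textbook Picard, BDG and Gronwall argument.

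Two small remarks. First, your growth bound for $\bar b$ involves $\sup_{a\in A}|a|$, so the moment constant strictly depends on $A$ as well as on $p,T,K$; the paper's statement has the same imprecision. Second, for $1<p<2$ the localization on $\{|x|\le N\}\in\cF_t$ is the cleaner existence route. The Young-absorption step does not close a Picard recursion in $L^p$ verbatim, because the supremum term lands on the previous iterate rather than the current one.
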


The above result is standard; see, for example, \cite[Theorem 1.3.5]{HPham2009ContstimeStoControlAndOptimizationFinancialAppl}.

\subsubsection{Well-posedness of the Control Problem and the SHJB Equation}
 We now strengthen the assumptions on the coefficients to ensure the well-posedness of the stochastic control problem \eqref{control_problem} and the associated SHJB equation \eqref{SHJB-main}.
 
\begin{ass}\label{ass:continuity_bounded_integrability}
   \begin{enumerate} 
\item (\textit{Continuity and Boundedness}) \begin{enumerate}[i)]
    \item The coefficient functions $b^{a}(t,x), f^{a}(t,x)$ and their first order derivatives in $x$ are continuous in $(t,x,a)$ and bounded by constant $K>0$ on $ [0,T]\times\bR^d\times A$, $\bP$-almost surely.
    \item The coefficient functions $\sigma(t,x), \widetilde{\sigma}(t,x)$ and their up to second-order derivatives (in $x$) are continuous in $(t,x)$ on $ [0,T]\times\bR^d$ $\bP$-almost surely.  
    Moreover, there exists a constant $K>0$ such that,
\begin{equation}
       \|\sigma(\cdot)\|_{\cL_{\cP^W}^{\infty}(\bP, H^{2,\infty})}+\|\widetilde{\sigma}(\cdot)\|_{\cL_{\cP^W}^{\infty}(\bP,H^{2,\infty})}\leq K.
    \end{equation}
    
    \item The terminal utility function $G(x)$ and its first order derivative in $x$ are continuous on $\bR^d$ $\bP$-almost surely.  Moreover, there exists a constant $K>0$ such that,
\begin{equation}
        \|G(\cdot)\|_{L_{\cF^W_T}^{\infty}(\bP, H^{1,\infty})}\leq K.
    \end{equation}
\end{enumerate}
\item (\textit{Integrability}) 
The terminal condition and running utility function further satisfy
 $$G\in L^2_{\cF_T^W}(\bP, H^{1,2})\quad\text{and}\quad\sup_{a\in A}f^a\in \cL^2_{\cP^W}(\bP, H^{1,2}).$$
    \end{enumerate}
\end{ass}

% Assumption \ref{ass:meas_Lipschitz_LinearGrowth} guarantees the measurability, Lipschitz continuity, and linear growth conditions required for the well-posedness of the controlled state equation \eqref{SDE-controlledstate}. However, these conditions alone do not provide the Sobolev regularity needed for the BSPDE framework, particularly on the unbounded domain $\bR^d$. Assumption \ref{ass:continuity_bounded_integrability} therefore imposes additional continuity, boundedness, and integrability requirements on the coefficients and their spatial derivatives in appropriate Sobolev spaces.

% It is worth noting that the uniform boundedness of the coefficients and their first-order derivatives in Assumption \ref{ass:continuity_bounded_integrability} implies the Lipschitz continuity and linear growth conditions in Assumption \ref{ass:meas_Lipschitz_LinearGrowth}. Nevertheless, the latter assumption is stated separately to emphasize the minimal conditions required for the well-posedness of the controlled state equation, while the additional Sobolev regularity conditions are needed for the analysis of the SHJB equation.\\

Under Assumptions \ref{ass:meas_Lipschitz_LinearGrowth} and \ref{ass:continuity_bounded_integrability}, the stochastic control problem \eqref{control_problem} is well defined, and the value function \eqref{value_fnc} is finite. To study the well-posedness of the SHJB equation \eqref{SHJB-main}, we introduce the Hamiltonian
 $$\widetilde{H}(t,x,p):=\sup_{a\in A}\big(b^a\cdot p+f^a\big)(t,x),\qquad (t,x,p)\in[0,T]\times\bR^d\times\bR^d.$$
 
 By Assumption \ref{ass:continuity_bounded_integrability}, $\widetilde{H}(\cdot,\cdot,p)$ is $\cP^W \otimes \cB(\bR^d)$-measurable for every $p\in\bR^d$. The uniform boundedness of $b^a$ implies that $\widetilde{H}$ is uniformly Lipschitz continuous in $p$; in particular, whenever the derivative $\widetilde{H}_p$ exists, it is uniformly bounded by $K$. Moreover, whenever the spatial derivative $\widetilde{H}_x$ exists, the boundedness of $D_xb^a$ and $D_xf^a$ yields the linear growth estimate $|\widetilde{H}_x(t,x,p)|\le K(1+|p|)$, up to a change of the constant $K$. Finally, the integrability condition on $f$ implies that $\widetilde{H}(\cdot,\cdot,0)\in \cL_{\cP^W}^2(\bP,H^{1,2})$. These properties provide the regularity required by the BSPDE well-posedness theory used below. \\

We further impose the following super-parabolicity (equivalently, uniform non-degeneracy) condition on the diffusion coefficient $\widetilde{\sigma}$, which is required for the BSPDE well-posedness theory underlying the SHJB equation \eqref{SHJB-main}.

\begin{ass}\label{ass:nondegeneracy/parabolicity}
	There exists a constant $\gamma>0$ such that for any vector $\xi\in\bR^d$ and $(t,x)\in[0,T]\times\bR^d$,
    \begin{equation}
        \xi^T \widetilde{\sigma}(t,x)\widetilde{\sigma}(t,x)^{\cT}\xi\geq \gamma |\xi|^2, \quad \text{a.s.}
    \end{equation}
\end{ass}

Under Assumption \ref{ass:nondegeneracy/parabolicity}, the diffusion matrix $\widetilde{\sigma}(t,x)$ is uniformly non-degenerate, which in particular yields full row rank (hence $m\ge d$). Consequently, the Moore--Penrose pseudoinverse
\[
\widetilde{\sigma}^{+}:=\widetilde{\sigma}^{\cT}\big(\widetilde{\sigma}\widetilde{\sigma}^{\cT}\big)^{-1}
\]
is well defined. Furthermore, it obeys the uniform estimate
\[
\sup_{(t,x)\in[0,T]\times\bR^d}\big|\widetilde{\sigma}^{+}(t,x)\big|\le \gamma^{-1/2},\quad \text{a.s.}
\]

\begin{defn}\label{defn:strong_solution}
A pair of stochastic processes $(v,\psi)\in \big(\cL_{\cP^W}^2(\bP, H^{2,2})\cap \cS_{\cP^W}^2(\bP, H^{1,2})\big)\times \cL_{\cP^W}^2(\bP, H^{1,2})$ is called a (strong) solution of the SHJB equation \eqref{SHJB-main} 
if for almost every $(\omega,x)\in \Omega\times\bR^d$, it holds that
\begin{align*}
    v(t,x)
    &=G(x)+\int_t^T \Big(\widetilde{H}(s,x,D_xv(s,x))+ tr\Big[\frac{1}{2}\big(\sigma\sigma^T+\widetilde{\sigma}\widetilde{\sigma}^T\big)D^2_x v+ D_x\psi\sigma^{\cT} \Big](s,x)\Big)\, ds
    \\
    &\quad -\int_t^T \psi(s,x)dW_s, \quad \forall\, t\in[0,T].
\end{align*}
\end{defn}

\begin{lem}[Existence and uniqueness of the solution of SHJB]\label{wellpossed_SHJB}
	Suppose that Assumptions \ref{ass:meas_Lipschitz_LinearGrowth}, \ref{ass:continuity_bounded_integrability} and \ref{ass:nondegeneracy/parabolicity} hold. 
    Then the SHJB equation \eqref{SHJB-main} admits a unique solution $\big(v,\psi\big)\in \big(\cL_{\cP^W}^2(\bP, H^{2,2})\cap \cS_{\cP^W}^2(\bP, H^{1,2})\big)\times \cL_{\cP^W}^2(\bP, H^{1,2})$ satisfying
    \begin{equation}
        \|v\|^2_{\cL^2_{\cP^W}( H^{2,2})}+\|v\|^2_{\cS^2_{\cP^W}( H^{1,2})}+\|\psi\|^2_{\cL^2_{\cP^W}(H^{1,2})}\leq C\Big(\big\|\sup_{a\in A }f^a\big\|^2_{\cL^2_{\cP^W}(H^{1,2})}+\|G\|^2_{L^2_{\cF_T^W}( H^{1,2})}\Big),
    \end{equation}
    where the constant $C$ depends on $d,K,\gamma$, and $T$.
\end{lem}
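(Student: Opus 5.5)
The plan is to treat \eqref{SHJB-main} as a semilinear BSPDE whose nonlinearity $\widetilde H(t,x,D_xv)$ is globally Lipschitz in the gradient. I would obtain the solution as a fixed point of a linearized map, in the spirit of the semilinear BSPDE theory of \cite{DuQiuTang2012LpTheoryBSPDEinWholeSpace,Hu_Ma_Yong2002SemiLinBSPDE}. The input is the linear theory: for a source $F\in\cL^2_{\cP^W}(\bP,L^2)$ and terminal datum $G\in L^2_{\cF^W_T}(\bP,H^{1,2})$, the linear BSPDE
\[
-dV=\Big(tr\big[\tfrac12(\sigma\sigma^{\cT}+\widetilde\sigma\widetilde\sigma^{\cT})D_x^2V+D_x\Psi\,\sigma^{\cT}\big]+F\Big)dt-\Psi\,dW_t,\qquad V(T)=G,
\]
has a unique solution $(V,\Psi)\in(\cL^2_{\cP^W}(H^{2,2})\cap\cS^2_{\cP^W}(H^{1,2}))\times\cL^2_{\cP^W}(H^{1,2})$. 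Its norm is bounded by $C(\|F\|_{\cL^2(L^2)}+\|G\|_{L^2(H^{1,2})})$, with $C=C(d,K,\gamma,T)$.

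The key structural point is super-parabolicity. After Itô's formula, the cross term $D_x\Psi\sigma^{\cT}$ together with $\|\Psi\|^2$ can absorb the $\sigma\sigma^{\cT}$ part of the second-order operator. The remaining coercivity is given by $\widetilde\sigma\widetilde\sigma^{\cT}\ge\gamma I$ (Assumption \ref{ass:nondegeneracy/parabolicity}). The boundedness of $\sigma,\widetilde\sigma$ in $H^{2,\infty}$ (Assumption \ref{ass:continuity_bounded_integrability}) controls the commutator terms that appear when one differentiates the equation once in $x$ to get the $H^{2,2}$ and $H^{1,2}$ bounds.

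For the fixed point, I would define the map $\Phi:u\mapsto V$ on $\cL^2_{\cP^W,\lambda}(\bP,H^{1,2})$, where $V$ solves the linear equation with $F=\widetilde H(\cdot,\cdot,D_xu)$. This source lies in $\cL^2(L^2)$, because $|\widetilde H(t,x,p)|\le|\widetilde H(t,x,0)|+K|p|$, the function $\widetilde H(\cdot,\cdot,0)\le\sup_af^a$ is in $\cL^2(L^2)$, and $\widetilde H$ is $\cP^W\otimes\cB(\bR^d)$-measurable. Take two inputs $u,u'$ with differences $\delta V=V-V'$ and $\delta\Psi=\Psi-\Psi'$. Applying Itô's formula to $e^{\lambda t}\|\delta V(t)\|^2$ and using the coercivity gives
\[
\bE\!\int_0^Te^{\lambda t}\big(\lambda\|\delta V\|^2+\gamma\|D_x\delta V\|^2\big)dt
\le C\,\bE\!\int_0^Te^{\lambda t}\big(\|\delta V\|^2+\|D_x\delta V\|\,\|\delta V\|\big)dt
+2K\,\bE\!\int_0^Te^{\lambda t}\|D_x(u-u')\|\,\|\delta V\|\,dt .
\]
Young's inequality then gives $\|\Phi u-\Phi u'\|_{\cL^2_\lambda(H^{1,2})}\le c(\lambda)\|u-u'\|_{\cL^2_\lambda(H^{1,2})}$, with $c(\lambda)\to0$ as $\lambda\to\infty$. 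For $\lambda$ large, $\Phi$ is a contraction, and its fixed point $v$, paired with the corresponding $\psi$, solves \eqref{SHJB-main} in the sense of Definition \ref{defn:strong_solution}. Since $v=\Phi v$ is itself the solution of a linear problem, the linear theory places $(v,\psi)$ in the stated spaces.

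Uniqueness comes from the same computation applied to the difference of two solutions: the Lipschitz bound on $\widetilde H$ in $p$ together with Gronwall's inequality forces the difference to vanish. For the a priori estimate, I would apply the linear estimate to $(v,\psi)$ with $F=\widetilde H(\cdot,\cdot,D_xv)$. This gives a bound by $\|\sup_af^a\|_{\cL^2}+K\|D_xv\|_{\cL^2(L^2)}+\|G\|_{H^{1,2}}$. I would control $\|D_xv\|_{\cL^2(L^2)}$ in turn by the energy (Itô) estimate for $e^{\lambda t}\|v\|^2$ with $\lambda$ large, which uses only the data. Combining the two yields the stated bound.

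The main obstacle is the linear estimate with only an $L^2$ source: establishing the $H^{2,2}$ regularity of $V$ and the $H^{1,2}$ regularity of $\Psi$. This requires differentiating the equation, handling the $D_x\Psi\sigma^{\cT}$ term via the parabolicity gap $\widetilde\sigma\widetilde\sigma^{\cT}$, and estimating commutators using the $H^{2,\infty}$ bounds on $\sigma,\widetilde\sigma$. I would either invoke this directly from \cite{DuQiuTang2012LpTheoryBSPDEinWholeSpace} or \cite{Hu_Ma_Yong2002SemiLinBSPDE}, or prove it by a Galerkin approximation combined with the Itô energy identity for $\|D_xV\|^2$.
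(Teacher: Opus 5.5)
Your proposal is correct and takes essentially the same route as the paper, which gives no argument of its own and simply invokes the standard super-parabolic semilinear BSPDE theory of \cite{KaiDuQiZhang2013semiLiniearDegnBSPDEandFBSDE,DuQiuTang2012LpTheoryBSPDEinWholeSpace}. Your sketch unpacks that theory: the linear $H^{2,2}$ estimate, with coercivity from $\widetilde\sigma\widetilde\sigma^{\cT}\ge\gamma I$ once the $D_x\psi\sigma^{\cT}$ term is absorbed, then a contraction in $\cL^2_{\cP^W,\lambda}(\bP,H^{1,2})$ using that $\widetilde H$ is $K$-Lipschitz in $p$, and finally an energy estimate to remove $\|D_xv\|$ from the bound.
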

The above result is standard; see, for example, \cite[Theorem 3.1]{KaiDuQiZhang2013semiLiniearDegnBSPDEandFBSDE} or the general $L^p$-theory \cite{DuQiuTang2012LpTheoryBSPDEinWholeSpace}. 
Now, the following verification theorem provides the link between the SHJB equation \eqref{SHJB-main} and the stochastic optimal control problem \eqref{control_problem}.

\begin{lem}[Verification Theorem]\label{verification_theorem}
Suppose that the Assumptions of Lemma \ref{wellp:FSDE} hold. Let $\big(v,\psi\big)\in \big(\cL_{\cP^W}^2(\bP, H^{2,2})\cap \cS_{\cP^W}^2(\bP, H^{1,2})\big)\times \cL_{\cP^W}^2(\bP, H^{1,2})$ denote the unique solution of the SHJB equation \eqref{SHJB-main}. Then, for any given $x\in L^2_{\cF_t}(\bP, \bR^d)$, $t\in[0,T]$ and $\alpha\in\cA$ we have $v(t,x)\geq J(t,x,\alpha)$ $\bP$-almost surely. Furthermore, assume that the function $H$ in \eqref{hamiltonian} attains its supremum at $\alpha^*_s\in A$ $\bP$-almost surely for any $s\in[t,T]$ such that $\alpha^*:=(\alpha^*_s)_{s\in[t,T]}\in \cA$. Then, we have for any given $x\in L^2_{\cF_t}(\bP, \bR^d)$, $$v(t,x)=J(t,x;\alpha^*)=\esssup_{\alpha \in \cA} J(t,x;\alpha)$$ $\bP$-almost surely for every $t\in[0,T]$,  and $\alpha^*$ is the corresponding optimal control. 
\end{lem}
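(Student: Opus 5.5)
The plan is to apply an Itô--Kunita--Wentzell formula to the composition $s\mapsto v(s,X_s)$, where $X=X^{t,x;\alpha}$ solves \eqref{SDE-controlledstate}. The SHJB equation \eqref{SHJB-main} then turns the drift of $v(s,X_s)$ into the running gain minus a nonnegative ``Hamiltonian gap''. Integrating from $t$ to $T$ and taking conditional expectations yields $v(t,x)\ge J(t,x;\alpha)$, with equality when the gap vanishes.

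\textbf{Step 1 (formal computation).} Fix $t$, $x\in L^2_{\cF_t}(\bP,\bR^d)$ and $\alpha\in\cA$, and write $X_s=X^{t,x;\alpha}_s$. The random field $v$ satisfies
\[
dv(s,y)=-\big(\mathcal{L}v+D_y\psi\text{-term}+\widetilde H(s,y,D_yv)\big)\,ds+\psi(s,y)\,dW_s .
\]
The It\^o--Wentzell formula gives
\[
d v(s,X_s)=dv(s,X_s)+D_xv^{\cT}dX_s+\tfrac12 tr\big[(\sigma\sigma^{\cT}+\widetilde\sigma\widetilde\sigma^{\cT})D_x^2v\big]ds+tr\big[D_x\psi\,\sigma^{\cT}\big]ds,
\]
where the last term is the cross-variation between the $dW$-part of $v$ and the $\sigma\,dW$-part of $X$, and everything is evaluated at $(s,X_s)$. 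The second-order term and the cross term cancel exactly against the corresponding terms in \eqref{SHJB-main}. This leaves
\[
d v(s,X_s)=-\Big[\widetilde H(s,X_s,D_xv)-H(s,X_s,D_xv,\alpha_s)\Big]ds-f^{\alpha_s}(s,X_s)\,ds+\big(\psi+\sigma^{\cT}D_xv\big)(s,X_s)\,dW_s+\big(\widetilde\sigma^{\cT}D_xv\big)(s,X_s)\,d\widetilde W_s .
\]

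\textbf{Step 2 (conclusion).} Integrate over $[t,T]$ and use $v(T,\cdot)=G$. The bracket is nonnegative by the definition of $\widetilde H$, so
\[
v(t,x)\ge \int_t^T f^{\alpha_s}(s,X_s)\,ds+G(X_T)-\int_t^T(\cdots)\,dW_s-\int_t^T(\cdots)\,d\widetilde W_s .
\]
Once the stochastic integrals are shown to be true martingales, we take $\bE[\cdot\,|\,\cF_t]$ and then $\bE[\cdot\,|\,\cF^W_t]$ via the tower property, using that $v(t,x)$ is $\cF^W_t$-measurable for deterministic $x$. This gives $v(t,x)\ge J(t,x;\alpha)$ a.s.; for random $\cF_t$-measurable $x$ the same argument is run conditionally on $\cF_t$.

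If $\alpha^*$ attains the supremum of $H$ along the trajectory and $\alpha^*\in\cA$, the bracket vanishes identically and the inequality becomes an equality. Combining this with the first part gives $v(t,x)=J(t,x;\alpha^*)=\esssup_{\alpha\in\cA}J(t,x;\alpha)$.

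\textbf{Main obstacle (justification).} The solution has only Sobolev regularity, $v\in\cL^2(H^{2,2})$ and $\psi\in\cL^2(H^{1,2})$, so the pointwise It\^o--Wentzell formula and the martingale property are not immediate. I would first mollify in $x$, setting $v^\varepsilon=v*\rho_\varepsilon$ and $\psi^\varepsilon=\psi*\rho_\varepsilon$. These are smooth in $x$ and satisfy the mollified equation with commutator error terms, and the classical Kunita formula applies to them.

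To pass to the limit $\varepsilon\to0$, expectations of the form $\bE\int_t^T|h(s,X_s)|\,ds$ must be controlled by $\|h\|_{\cL^2(L^2)}$-type norms. The super-parabolicity Assumption \ref{ass:nondegeneracy/parabolicity}, applied conditionally on $\cF^W$ (with $\widetilde W$ independent), supports a Krylov-type estimate for $X$. The bounded coefficients of Assumption \ref{ass:continuity_bounded_integrability} make this estimate available, and the estimate controls these integrals.

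The same estimate, together with Lemma \ref{wellp:FSDE}, gives $\bE\int_t^T|(\psi+\sigma^{\cT}D_xv)(s,X_s)|^2ds<\infty$. This makes the stochastic integrals square-integrable martingales, so their conditional expectations vanish. It also ensures that the commutator terms from the mollification vanish in the limit.
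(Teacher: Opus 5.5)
Your argument is essentially the paper's proof. The paper disposes of this lemma by the standard verification argument: it applies the generalized It\^o--Kunita--Wentzell formula of Yang and Tang (Lemma~\ref{Ito_kunita_formula}) to $v(s,X_s)$, the second-order and cross-variation terms cancel against \eqref{SHJB-main}, and the nonnegative Hamiltonian gap gives $v\ge J$, with equality under $\alpha^*$. Your mollification/Krylov paragraph is therefore a sketch of that cited lemma rather than a different route, and if you carry it out yourself you should note two things. First, a bound of $\bE\int_t^T|h(s,X_s)|\,ds$ by $\|h\|_{\cL^2(L^2)}$ fails for a fixed deterministic starting point once $d\ge 2$, because the initial point must be averaged against a density; this is why Lemma~\ref{Ito_kunita_formula} holds only for a.e.\ $x$, so reaching every $x$ needs a separate continuity argument. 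Second, the stochastic integrals are more safely disposed of by localization with stopping times than by proving $\bE\int_t^T|\psi(s,X_s)|^2\,ds<\infty$.
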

Through standard verification arguments (cf. \cite{SPeng1992SHJB}), the proof follows straightforwardly from the generalized It\^o-Kunita-Wentzell formula below. 
\begin{lem}\label{Ito_kunita_formula} (Generalized It\^o-Kunita-Wentzell formula by Yang and Tang \cite[Theorem 3.1]{YangTang2013SdeRandomCoeff_BSPDE}) Suppose that the Assumptions \ref{ass:meas_Lipschitz_LinearGrowth}, \ref{ass:continuity_bounded_integrability} and \ref{ass:nondegeneracy/parabolicity} hold. Let $X^{t,x;\alpha}$ denote the unique strong solution to the controlled SDE \eqref{SDE-controlledstate} governed by the control process $\alpha\in\cA$. 
     Also suppose that the random field $u:\Omega\times [0,T]\times \bR^d \rightarrow \bR$ satisfies, 
     for every $\tau \in[0,T]$,
	\begin{equation}
		u(\tau,x)=u(0,x)+\int_0^{\tau }F(t,x)dt + \int_0^{\tau}\psi (t,x)dW_t,\quad a.e.\;x\in\bR^d,
	\end{equation}
$\bP$-almost surely, where $u\in \cL_{\cP^W}^2(H^{2,2})\cap\cS_{\cP^W}^2(H^{1,2}) ,\; \psi \in \cL_{\cP^W}^2(H^{1,2})$ and $F\in\cL_{\cP^W}^2(L^2)$. Then, for each $\tau \in[t,T]$, $\alpha\in\cA$, and almost every $x\in\bR^d$, we have 
\begin{align}
	u\left(\tau,X_{\tau}^{t,x;\alpha}\right) &= u(t,x) + \int_t^{\tau}\Big( tr\Big[\frac{1}{2}\big(\sigma\sigma^\cT+\widetilde{\sigma}\widetilde{\sigma}^\cT\big)D^2_x u+ D_x\psi\sigma^{\cT} \Big]+(b^{\alpha})^{\cT}D_xu + F\Big)\left(s,X_s^{t,x;\alpha}\right) ds\nonumber \\
	& +\int_t^{\tau}(\psi+\sigma^{\cT} D_x u)\left(s,X_s^{t,x;\alpha}\right) dW_s 
    +\int_t^{\tau} (\widetilde{\sigma}^\cT D_x u )\left(s,X_s^{t,x;\alpha}\right)d\widetilde{W}_s,\quad \text{a.s.}
\end{align}
\end{lem}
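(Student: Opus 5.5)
The plan is to prove the formula first for spatially smooth random fields and then pass to the limit by a density argument. The integrability in the initial point $x$ will control compositions of merely Sobolev functions with the flow $X^{t,x;\alpha}$. This is why the statement is only claimed for almost every $x\in\bR^d$.

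\textbf{Step 1 (smooth case).} Let $\rho_\eps$ be a standard mollifier and set $u_\eps=u*\rho_\eps$, $F_\eps=F*\rho_\eps$, $\psi_\eps=\psi*\rho_\eps$. Then, for every $x$,
\[
u_\eps(\tau,x)=u_\eps(0,x)+\int_0^\tau F_\eps(t,x)\,dt+\int_0^\tau\psi_\eps(t,x)\,dW_t .
\]
Each of $u_\eps$, $F_\eps$, $\psi_\eps$ is $C^\infty$ in $x$, with all derivatives square integrable in $(\omega,t)$ and locally bounded in $x$. Hence $u_\eps$ is a $C^2$-semimartingale field in Kunita's sense. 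The classical It\^o--Kunita--Wentzell formula applied to $u_\eps(s,X_s^{t,x;\alpha})$ gives:
\begin{itemize}
\item the drift $F_\eps+(b^{\alpha})^{\cT}D_xu_\eps+\tfrac12 tr[(\sigma\sigma^{\cT}+\widetilde\sigma\widetilde\sigma^{\cT})D_x^2u_\eps]$;
\item the martingale terms $(\psi_\eps+\sigma^{\cT}D_xu_\eps)\,dW$ and $\widetilde\sigma^{\cT}D_xu_\eps\,d\widetilde W$;
\item the cross-variation term $tr[D_x\psi_\eps\,\sigma^{\cT}]$, which comes from $d\langle D_xu_\eps(\cdot,y),X\rangle$ and involves only $W$, since $\psi$ multiplies $dW$ alone.
\end{itemize}
This is exactly the claimed identity with $\eps$ attached.

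\textbf{Step 2 (flow change-of-variables estimate).} This is the key tool and the main obstacle. For nonnegative measurable $h$ on $\Omega\times[0,T]\times\bR^d$, I aim to show
\[
\bE\int_{\bR^d}\int_t^T h\big(s,X_s^{t,x;\alpha}\big)\,ds\,dx\le C\,\bE\int_t^T\int_{\bR^d}h(s,y)\,dy\,ds .
\]
The route depends on the feedback map.
\begin{itemize}
\item If the feedback is Lipschitz (as in Lemma~\ref{wellp:FSDE}), the coefficients of the state SDE are Lipschitz in $x$. In that case $x\mapsto X^{t,x;\alpha}_s$ is a homeomorphism whose inverse has controlled Jacobian moments, and the bound follows from the change of variables $y=X_s^{t,x;\alpha}$.
\item If only a general $\cP^W\times\cB(\bR^d)$-measurable feedback is available, I would instead use the uniform non-degeneracy of Assumption~\ref{ass:nondegeneracy/parabolicity}. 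This gives a Krylov-type estimate or a density bound for $X_s^{t,x;\alpha}$ conditional on $\cF^W$, with $b$ bounded. Integrating in $x$ then produces an $L^1$ bound with a Gaussian-type kernel.
\end{itemize}
Obtaining this estimate with only measurable randomness in the coefficients is where I expect the real work to lie.

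\textbf{Step 3 (passage to the limit).} Take $\eps\to0$ along a subsequence and treat each term of the mollified identity separately.
\begin{itemize}
\item \emph{Pointwise terms.} We have $u_\eps\to u$ in $\cS^2_{\cP^W}(H^{1,2})$. Combined with Step 2 (applied with $h=\sup_s|u_\eps-u|^2$, using Sobolev embedding in time-sup form, or a Fubini argument if needed), this gives $u_\eps(\tau,X_\tau)\to u(\tau,X_\tau)$ and $u_\eps(t,x)\to u(t,x)$ in $L^2(\Omega\times\bR^d_x)$ along a subsequence.
\item \emph{Drift terms.} These are controlled by Step 2 with $h$ equal to
\[
|F_\eps-F|+K\big(|D^2_x(u_\eps-u)|+|D_x(\psi_\eps-\psi)|+|D_x(u_\eps-u)|\big),
\]
using the boundedness of $b,\sigma,\widetilde\sigma$ and the convergences $u_\eps\to u$ in $\cL^2(H^{2,2})$, $\psi_\eps\to\psi$ in $\cL^2(H^{1,2})$ and $F_\eps\to F$ in $\cL^2(L^2)$.
\item \emph{Stochastic integrals.} These converge in $L^2(\Omega\times\bR^d_x)$ by the It\^o isometry, followed by Step 2 applied to $|\psi_\eps-\psi|^2+K|D_x(u_\eps-u)|^2$.
\end{itemize}

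Extracting a further subsequence converging for a.e.\ $x$, almost surely, yields the identity for almost every $x$ and each fixed $\tau$. Continuity in $\tau$ of both sides then upgrades this to all $\tau\in[t,T]$ simultaneously.
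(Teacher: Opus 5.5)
The paper gives no proof of this lemma (it is quoted from Yang--Tang), so the question is whether your argument stands on its own. Step~1 is fine. Step~2, on which all of Step~3 rests, is false as stated: once $\sigma$ depends on $x$, no deterministic constant $C$ works. This holds even for $h$ that is $\cP^W\otimes\cB(\bR^d)$-measurable, like the integrands you feed into it.

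For instance, take $d=m=1$, $b=0$, $\widetilde\sigma=1$ and $\sigma(x)=cx$. The flow is affine, $X^{t,x}_s=\mathcal E_s\big(x+\int_t^s\mathcal E_r^{-1}\,d\widetilde W_r\big)$ with $\mathcal E_s=\exp\big(c(W_s-W_t)-\tfrac{c^2}{2}(s-t)\big)$. Hence $\int_{\bR}h(s,X^{t,x}_s)\,dx=\mathcal E_s^{-1}\int_{\bR}h(s,y)\,dy$. The $\cF^W_s$-measurable choice $h(s,y)=\mathbf 1_{\{\mathcal E_s<1/N\}}\mathbf 1_{[0,1]}(y)$ then makes the left side at least $N$ times the right side. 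Only the boundedness of $\sigma$ is violated here. The mechanism is generic: the volume distortion of the flow is random, unbounded in $\omega$, and measurable with respect to the same information as $h$. It equally defeats a conditional Gaussian-type density bound with deterministic constants, as in your second bullet.

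Your Lipschitz-case justification does not produce the estimate either. The change of variables gives $\bE\int\!\int h(s,y)J_s(y)\,dy\,ds$ with the random weight $J_s(y)=|\det D_y(X^{t,\cdot}_s)^{-1}(y)|$. Moment bounds on $J$ only yield, via Cauchy--Schwarz, $\bE\int_{B_R}\int_t^T h(s,X^{t,x}_s)\,ds\,dx\le C_R\|h\|_{L^2(\Omega\times[t,T]\times\bR^d)}$. That handles the drift terms and $u(\tau,X_\tau)$, whose integrands are in $L^2$. It does not handle the quadratic-variation integrands $|\psi_\eps-\psi|^2$ and $|D_x(u_\eps-u)|^2$, which are only in $L^1$. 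In particular, you never show that $\int_t^T|\psi+\sigma^{\cT}D_xu|^2(s,X^{t,x}_s)\,ds<\infty$ a.s., which is needed just for the stochastic integrals in the formula to be defined.

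What is missing is an estimate with a \emph{random, a.s.\ finite} constant, with Step~3 run in probability instead of in $L^2(\Omega\times\bR^d)$. If $x\mapsto X^{t,x}_s$ is a $C^1$-diffeomorphism with $(s,x)\mapsto\det D_xX^{t,x}_s$ continuous, the change of variables gives, pathwise,
\[
\int_{B_R}\int_t^T h(s,X^{t,x}_s)\,ds\,dx\le\Theta_R\int_t^T\int_{\bR^d}h(s,y)\,dy\,ds,\qquad\Theta_R:=\sup_{s\in[t,T],\,x\in\overline{B_R}}\big|\det D_xX^{t,x}_s\big|^{-1}<\infty\ \text{a.s.}
\]
So each error term tends to $0$ in probability. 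Then $\bE\int_{B_R}\min\{1,\cdot\}\,dx\to0$, and a subsequence converges for a.e.\ $x$.

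Even this requires a differentiable flow, which a merely Lipschitz drift $x\mapsto b^{\widetilde{\boldsymbol{\alpha}}(s,x)}(s,x)$ does not give without an additional approximation. For a general admissible feedback that is only measurable there is no such flow at all. That is precisely the case the paper needs, since the lemma is applied along $\alpha^*$ under Assumption~\ref{ass:feedback_control_lipschitz}. One natural route there is to condition on $\cF^W_T$. The conditional density of $X_s$ solves a super-parabolic forward SPDE whose solution is bounded almost surely, though not uniformly in $\omega$. This is where Assumption~\ref{ass:nondegeneracy/parabolicity} would actually enter.
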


\subsection{Feedback Control Map and Change of Measure}

We first introduce a feedback selector associated with the Hamiltonian maximization problem.
\begin{defn}
    For $(t,x,\tilde{z})\in [0,T]\times \bR^d \times \bR^m$, a feedback control map $\boldsymbol{\alpha}$ is a $\cP^W\otimes
\cB(\mathbb R^d)\otimes
\cB(\mathbb R^m)$-measurable function satisfying
		\begin{equation}\label{def:feedback_control_a(t,x,z)}
			\boldsymbol{\alpha}(t,x,\tilde{z})\in \arg\max_{a\in A} \Big( \big(\widetilde{\sigma}^{+}b^{a}\big)^\cT(t,x)\tilde{z}+f^{a}(t,x)\Big).
		\end{equation}
\end{defn}
Under Assumptions \ref{ass:meas_Lipschitz_LinearGrowth} and \ref{ass:continuity_bounded_integrability}, the mapping
\[
(t,x,\tilde{z},a)\mapsto \big(\widetilde \sigma^{+}b^{a} \big)^\cT(t,x)\tilde{z}+f^{a}(t,x)
\]
is $\cP^W\otimes\cB(\mathbb R^d)\otimes
\cB(\mathbb R^m)\otimes\cB(A)$-measurable and continuous in $a$. Since the control set $A$ is compact, the maximum is attained, 
and the measurable selection theorem yields a measurable maximizer. Consequently, the feedback map $\boldsymbol{\alpha}$ in \eqref{def:feedback_control_a(t,x,z)} may be chosen to be 
$\cP^W\otimes
\cB(\mathbb R^d)\otimes
\cB(\mathbb R^m)$-measurable.
The convergence analysis requires the following regularity of the feedback control map $\boldsymbol{\alpha}(t,x,\tilde{z})$.

\begin{ass}\label{ass:control_lipschitz_bound}
		There exist constants $K,\theta >0$ such that 
	 for all $x,x'\in\bR^d$, $\tilde{z},\tilde{z}'\in\bR^m$, and $t\in[0,T]$,
	\begin{equation*}
	    |\boldsymbol{\alpha}(t,x,\tilde{z})-\boldsymbol{\alpha}(t,x,\tilde{z}')|\leq \sqrt{\theta}|\tilde{z}-\tilde{z}'|, \quad \text{a.s.,}
	\end{equation*}
and
\begin{equation*}
    |\boldsymbol{\alpha}(t,x,\tilde{z})-\boldsymbol{\alpha}(t,x',\tilde{z})|\leq K|x-x'|,\quad \text{a.s.}
\end{equation*}
\end{ass}

\begin{rmk}
Assumption \ref{ass:control_lipschitz_bound} is satisfied in natural classes of examples. 
We give one such example on the whole space. Assume, for simplicity, that \(m=d\) and
\[
    \widetilde\sigma(t,x)=I_d.
\]
Let \(A=\overline{B_R(0)}\subset\mathbb R^l\), let \(B\in\mathbb R^{d\times l}\), and suppose
\[
    b^a(t,x)=b_0(t,x)+\ell(x)g(x)Ba,
\]
and
\[
    f^a(t,x)=f_0(t,x)-\frac{\rho}{2}\ell(x)|a|^2,
    \qquad \rho>0.
\]
Here \(b_0\) and \(f_0\) are assumed to satisfy the standing measurability, boundedness, and Sobolev regularity assumptions. Assume also that
\[
    \ell\in C_b^1(\mathbb R^d)\cap H^{1,2}(\mathbb R^d),
    \qquad
    \ell(x)\ge0,
\]
and
\[
    g\in C_b^1(\mathbb R^d),
    \qquad
    0<g_*\le g(x)\le g^*<\infty.
\]
Then the \(a\)-dependent part of the Hamiltonian is
\[
    \ell(x)\left(g(x)(Ba)^\cT\tilde z-\frac{\rho}{2}|a|^2\right).
\]
For every \(x\) such that \(\ell(x)>0\), maximizing the Hamiltonian over \(a\in A\) is equivalent to maximizing
\[
    g(x)(Ba)^\cT\tilde z-\frac{\rho}{2}|a|^2.
\]
Therefore,
\[
    \boldsymbol{\alpha}(t,x,\tilde z)
    =
    \Pi_A\left(\frac{g(x)}{\rho}B^\cT\tilde z\right),
\]
where \(\Pi_A\) denotes the Euclidean projection onto \(A\). At points where \(\ell(x)=0\), the Hamiltonian is independent of \(a\), and the same formula may be used as a measurable tie-breaking rule.

Since the projection onto a closed convex set is non-expansive, we have
\[
    |\boldsymbol{\alpha}(t,x,\tilde z)-\boldsymbol{\alpha}(t,x,\tilde z')|
    \le
    \frac{g^*\|B\|}{\rho}|\tilde z-\tilde z'|.
\]
Moreover, because \(A=\overline{B_R(0)}\) and \(g\) is bounded away from zero, the map
\[
    x\mapsto 
    \Pi_A\left(\frac{g(x)}{\rho}B^\cT\tilde z\right)
\]
is Lipschitz uniformly in \(\tilde z\). Indeed,
\[
    |\boldsymbol{\alpha}(t,x,\tilde z)-\boldsymbol{\alpha}(t,x',\tilde z)|
    \le
    \frac{R}{g_*}|g(x)-g(x')|
    \le
    \frac{R\|Dg\|_\infty}{g_*}|x-x'|.
\]
Hence Assumption \ref{ass:control_lipschitz_bound} holds. The role of the factor \(\ell(x)\) is to provide spatial decay of the \(a\)-dependent coefficients on \(\mathbb R^d\), while the bounded positive factor \(g(x)\) allows the optimizer to depend on the state variable \(x\).
\end{rmk}

% \subsubsection{Regularity of the Value Function}
In the non-Markovian setting, classical differentiability of the value function $v$ and of the auxiliary process $\psi$ is generally unavailable. Nevertheless, the following regularity properties are sufficient for the subsequent analysis. 
Arguing as in \cite[Proposition 3.3]{JQiu2018ViscosSolSHJBEqn}, Assumptions \ref{ass:meas_Lipschitz_LinearGrowth} and \ref{ass:continuity_bounded_integrability} imply that, 
for every $(\alpha,x)\in\cA\times\bR^d$, the process $\{v(s,X_s^{t,x;\alpha})\}_{s\in[t,T]}$ has continuous paths. Moreover, there exists a constant $M>0$, depending only on $K$ and $T$, such that for each $t\in[0,T]$,
\begin{equation}
	    |v^0(t,x)-v^0(t,y)|+ |v(t,x)-v(t,y)|\leq M|x-y|;\quad x,y\in\bR^d, \quad \bP\text{-a.s.,}
\end{equation}
where $v^0(t,x)$ may be thought of as the gain functional $J(t,x;a^0)$ with $a^0(t,x)\equiv \bar a_0\in A$ as set in Algorithm 1. Consequently, for each fixed $t$, the map $x\mapsto v(t,x)$ is Lipschitz continuous and hence differentiable almost everywhere with respect to $x$ and whenever the spatial gradients exist,
\begin{equation}\label{value_fnc_gradient_bound}
	    |D_xv^0(t,x)|+|D_xv(t,x)|\leq M,\quad \text{$\bP$-a.s.}
\end{equation}

By the well-posedness result in Lemma \ref{wellpossed_SHJB} and the verification theorem in Lemma \ref{verification_theorem}, the SHJB equation \eqref{SHJB-main} admits a unique solution $(v,\psi)$, whose first component coincides with the value function of the stochastic control problem \eqref{control_problem}. The associated probabilistic representation, together with the feedback selector \eqref{def:feedback_control_a(t,x,z)}, motivates the following candidate optimal feedback control:

\begin{equation}\label{optimal_feedback_control_law}
	    \alpha^*_s
:=
\boldsymbol{\alpha}
\bigl(
s,
X_s,
\big(\widetilde{\sigma}^\cT D_xv\big)(s,X_s)
	\bigr).
\end{equation}

Since the feedback map $\boldsymbol{\alpha}$ takes values in the compact control set $A$, the process $(\alpha_s^*)_{s\in[0,T]}$ is bounded. Its admissibility, however, is not automatic. Although $\boldsymbol{\alpha}(s,x,\tilde{z})$ is uniformly Lipschitz continuous in $x$ and $\tilde{z}$ by Assumption \ref{ass:control_lipschitz_bound}, the closed-loop map

$$(s,x)
\longmapsto
\boldsymbol{\alpha}
\bigl(
	s,x,\big(\widetilde{\sigma}^\cT D_xv\big)(s,x)
	\bigr)$$
need not be Lipschitz continuous in $x$, since $D_xv$ is only known to be bounded and measurable. If this closed-loop map is Lipschitz continuous in the state variable, then Lemma \ref{wellp:FSDE} implies that the closed-loop state equation 
\[
dX_s
=
b^{\alpha_s^*}(s,X_s)\,ds
+\sigma(s,X_s)\,dW_s+\widetilde{\sigma}(s,X_s)\,d\widetilde{W}_s,
\quad s\in[t,T], \quad X_t=x,
\]
admits a unique strong solution for every \((t,x)\in[0,T]\times\mathbb R^d\), and the feedback process $\alpha^*$ is admissible. This condition is satisfied, for instance, if the spatial gradient $D_xv$ is Lipschitz continuous in $x$; in that case, the Lipschitz regularity of $\widetilde{\sigma}$ and Assumption \ref{ass:control_lipschitz_bound} yield the required Lipschitz continuity of the closed-loop map.

To avoid imposing or proving additional regularity of $D_xv$, and to focus on the convergence analysis, we make the following admissibility assumption.
\begin{ass}\label{ass:feedback_control_lipschitz}
	   The candidate feedback control process defined in \eqref{optimal_feedback_control_law} belongs to the admissible class $\cA$.
\end{ass}
Under this assumption, Lemma \ref{verification_theorem} implies that $\alpha^*$ is an optimal control for the stochastic control problem \eqref{control_problem}.

We next introduce an equivalent probability measure that will be used in the subsequent analysis. 
The construction relies on the Cameron--Martin--Girsanov theorem. The boundedness of $b^a$, together with the super-parabolicity condition in Assumption \ref{ass:nondegeneracy/parabolicity}, 
implies that $\widetilde{\sigma}^{+}b^a$ is uniformly bounded. 
Consequently, Novikov's condition is satisfied for the feedback control \eqref{optimal_feedback_control_law}, and the following change of measure is well defined.
% \begin{lem}[Cameron-Martin-Girsanov theorem]
% 	Let $\gamma=\{\gamma_t: t\in [0,T]\}$ and $\widetilde{\gamma}=\{\widetilde{\gamma}_t: t\in [0,T]\}$ be two $\cP$-measurable processes such that 
% 	\begin{equation}
% 		\bE^{\bP} \Bigg[exp \Bigg(\frac{1}{2} \int_0^T \gamma^2_t dt\Bigg)\Bigg]< \infty\quad \text{and}\quad \bE^{\bP} \Bigg[exp \Bigg(\frac{1}{2} \int_0^T \widetilde{\gamma}^2_t dt\Bigg)\Bigg]< \infty.
% 	\end{equation}
% There exists a measure $\bQ$ on $\big(\Omega, \cF\big)$ such that
% \begin{enumerate}
% 	\item $\bQ$ is equivalent to $\bP$.
% 	\item $\frac{d\bQ}{d\bP}=exp\Big[-\int_0^T \gamma_t dW_t -\int_0^T \widetilde{\gamma}_t d\widetilde{W}_t-\frac{1}{2} \int_0^T \gamma^2_t dt-\frac{1}{2} \int_0^T \widetilde{\gamma}^2_t dt\Big]$.
% 	\item The processes $B=\Big\{ B_t : t\in [0,T]\Big\}$ and  $\widetilde{B}=\Big\{ \widetilde{B}_t : t\in [0,T]\Big\}$ defined as $B_t=W_t+\int_0^t \gamma_s ds$  and $\widetilde{B}_t=\widetilde{W}_t+\int_0^t \widetilde{\gamma}_s ds$ are two $\big(\bF,\bQ\big)$-Brownian motions.
	
% \end{enumerate}
% \end{lem}

\begin{prop}\label{change_measure} 
	    Let Assumptions \ref{ass:meas_Lipschitz_LinearGrowth}--\ref{ass:feedback_control_lipschitz} hold. Put
	    \[
	    \eta_t:=\big(\widetilde{\sigma}^{+}b^{\alpha^*_t}\big)(t,X_t),\qquad t\in[0,T].
	    \]
	    Define the probability measure $\bQ$ by
		\begin{equation}
			\frac{d\bQ}{d\bP}=\exp\Bigg(-\int_0^T \eta_t d\widetilde{W}_t-\frac{1}{2}\int_0^T|\eta_t|^2 dt\Bigg).
		\end{equation}
	Then, by the Cameron--Martin--Girsanov theorem, $\bQ$ is equivalent to $\bP$, and the processes $B=\{ B_t : t\in [0,T]\}$ and $\widetilde{B}=\{ \widetilde{B}_t : t\in [0,T]\}$ defined by
	\begin{equation}
		 B_t:=W_t,\qquad \widetilde{B}_t:= \widetilde{W}_t +\int_0^t \eta_s\,ds,
	\end{equation}
	are two independent $\big(\bF,\bQ\big)$-Brownian motions. 
\end{prop}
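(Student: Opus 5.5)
We will verify the hypotheses of the Cameron--Martin--Girsanov theorem for the $m$-dimensional Brownian motion $(W,\widetilde W)$ under $\bP$ with drift $(0,\eta)$, and then read off the conclusions.

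\textbf{Step 1: well-definedness and measurability of $\eta$.} By Assumption \ref{ass:feedback_control_lipschitz}, the feedback process $\alpha^*$ in \eqref{optimal_feedback_control_law} lies in $\cA$. Hence the closed-loop state $X$ exists as a unique strong, $\bF$-adapted and continuous solution. Then $\alpha^*_t$ is $\bF$-progressively measurable, since it is a composition of the $\cP^W\otimes\cB(\bR^d)\otimes\cB(\bR^m)$-measurable selector $\boldsymbol{\alpha}$ with the adapted processes $X_t$ and $(\widetilde\sigma^\cT D_xv)(t,X_t)$. Because $(t,x,a)\mapsto(\widetilde\sigma^{+}b^a)(t,x)$ is jointly measurable, $\eta$ is progressively measurable.

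\textbf{Step 2: uniform bound on $\eta$.} Assumption \ref{ass:continuity_bounded_integrability} gives $|b^a|\le K$, and Assumption \ref{ass:nondegeneracy/parabolicity} gives $|\widetilde\sigma^+|\le\gamma^{-1/2}$. Therefore
\[
|\eta_t|\le K\gamma^{-1/2}\quad \text{for all } t,\ \bP\text{-a.s.}
\]
Consequently $\bE^\bP\big[\exp\big(\tfrac12\int_0^T|\eta_t|^2dt\big)\big]\le \exp\big(K^2T/(2\gamma)\big)<\infty$, so Novikov's condition holds. The stochastic exponential
\[
Z_t=\exp\Big(-\int_0^t\eta_s\,d\widetilde W_s-\tfrac12\int_0^t|\eta_s|^2ds\Big)
\]
is therefore a true $(\bF,\bP)$-martingale with $\bE^\bP[Z_T]=1$. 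Since $Z_T>0$ a.s., $\bQ$ is a probability measure equivalent to $\bP$.

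\textbf{Step 3: the $\bQ$-Brownian motions.} We apply the multidimensional Girsanov theorem to the $2m$-dimensional $(\bF,\bP)$-Brownian motion $\mathbf W=(W,\widetilde W)$ with kernel $\Theta_t=(0,\eta_t)$. This makes $\mathbf W_t+\int_0^t\Theta_s\,ds=(W_t,\widetilde B_t)$ an $(\bF,\bQ)$-Brownian motion in $\bR^{2m}$. Alternatively, one can argue directly via Lévy's characterization. Under $\bQ$, $B$ and $\widetilde B$ are continuous local martingales: by Girsanov, $W-\langle W,\int\! -\eta\,d\widetilde W\rangle=W$ since $\langle W,\widetilde W\rangle=0$, and similarly $\widetilde W+\int\eta\,ds$. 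Quadratic covariations are unchanged by an equivalent change of measure, so $\langle B\rangle_t=tI_m$, $\langle\widetilde B\rangle_t=tI_m$ and $\langle B,\widetilde B\rangle=0$. Lévy's characterization then shows that $(B,\widetilde B)$ is a $2m$-dimensional $(\bF,\bQ)$-Brownian motion, which in particular means $B$ and $\widetilde B$ are independent.

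The only mildly delicate point is Step 1, namely checking that the closed-loop $\eta$ is a genuine progressively measurable process. This rests entirely on the admissibility Assumption \ref{ass:feedback_control_lipschitz} and the measurable selection of $\boldsymbol\alpha$. Once boundedness is established, the rest is routine.
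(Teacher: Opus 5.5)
Your proposal is correct and follows the same route as the paper. The paper likewise uses $|b^a|\le K$ together with $|\widetilde\sigma^{+}|\le\gamma^{-1/2}$ to get a uniformly bounded $\eta$, hence Novikov's condition, and then invokes the Cameron--Martin--Girsanov theorem; your measurability check and the L\'evy-characterization argument for the independence of $B$ and $\widetilde B$ make explicit what the paper leaves implicit, as does the harmless convention (needed because $X=X^{t,x}$ only lives on $[t,T]$) of setting $\eta_s=0$ for $s<t$.
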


Since $\bQ$ is equivalent to $\bP$, null sets are unchanged; in particular, $\bF$-adaptedness and progressive measurability are preserved under the change of measure.
We finally note that, if the admissibility requirement were relaxed so that the controlled state equation \eqref{SDE-controlledstate} were required only to admit a weak solution, then Proposition \ref{change_measure} could in principle be used to construct such a solution and to establish the admissibility of the feedback control \eqref{optimal_feedback_control_law}. In that formulation, Assumption \ref{ass:feedback_control_lipschitz} would no longer be necessary. Developing this weak formulation, however, would require substantial additional arguments for both the non-Markovian control problem and the associated BSPDE theory; such a framework does not appear to be available in the existing literature.

%%%%%%%%%%%%%%%%%%%%%%%%%%%%
%%%%%%%%%%%%%%%%%%%%%%%%%%%%
%%%%%%%%%%%%%%%%%%%%%%%%%%

\section{Convergence Analysis}\label{sec:Theoretical_results_and_proofs}
Throughout this section, \(X\equiv X^{t,x}\) denotes the solution of the controlled SDE \eqref{SDE-controlledstate} with initial condition \(X_t=x\in L^2_{\cF_t}(\bP,\bR^d)\), 
driven by the optimal feedback control \(\alpha^*\in\cA\) defined in \eqref{optimal_feedback_control_law}. 
Thus,
\begin{equation}\label{SDE-optimalcontrolled}
	\begin{cases}
		dX_s &=b^{\alpha_s^*} (s,X_s)ds + \sigma(s,X_s)dW_s + \widetilde{\sigma}(s,X_s)d\widetilde{W}_s, \quad s\in[t,T],\\
		X_t &=x.
	\end{cases}		
\end{equation}

\subsection{BSDE Representations}
The convergence analysis is based on the connection between non-Markovian backward stochastic differential equations (BSDEs) and backward stochastic partial differential equations (BSPDEs). Since the argument is carried out at the BSDE level, we first derive the BSDE representations of the BSPDEs \eqref{SHJB-main} and \eqref{SHJB-Algorithm}. These representations are given in the following two propositions.

% Stochastic Feynman-Kac (non-Markovian) equivalent
\begin{prop}[BSDE representation of the linear BSPDE]\label{prop:linearBSPDE2BSDE}
    Suppose that Assumptions \ref{ass:meas_Lipschitz_LinearGrowth}--\ref{ass:feedback_control_lipschitz} hold. Let $(v^n, \psi^n)\in\big(\cL_{\cP^W}^2(\bP, H^{2,2})\cap \cS_{\cP^W}^2(\bP, H^{1,2})\big)\times \cL_{\cP^W}^2(\bP, H^{1,2})$ be the unique solution of the linear BSPDE \eqref{SHJB-Algorithm}. For $\tau\in[t,T]$, define
    \begin{equation}\label{v^n:defn}
 	Y_\tau^n:=v^n(\tau,X_\tau),\quad
 	Z^n_\tau:=\big(\psi^n+\sigma^\cT D_xv^n\big)(\tau,X_\tau),\quad
 	\widetilde{Z}_\tau^n:=\big(\widetilde{\sigma}^\cT D_xv^n\big)(\tau,X_\tau).
    \end{equation}
  For $s\in[t,T]$ and $\tilde{z}_1,\tilde{z}_2\in\bR^m$, introduce the driver
 \begin{equation}\label{driver_BSDE}
 	F_s(\tilde{z}_1,\tilde{z}_2):= \big(\widetilde{\sigma}^{+} b^{\boldsymbol{\alpha}(s,X_s,\tilde{z}_1)}\big)^{\cT}(s,X_s)\tilde{z}_2 +f^{\boldsymbol{\alpha}(s,X_s,\tilde{z}_1)}(s,X_s),
 \end{equation}
    where $\boldsymbol{\alpha}$ is the feedback control map defined in \eqref{def:feedback_control_a(t,x,z)}. Then the triplet $(Y^n, Z^n, \widetilde{Z}^n)$ satisfies, for every $\tau\in[t,T]$, the BSDE
\begin{align} \label{BSDE_v^n_alg}
	Y^n_\tau = G(X_T) + \int_\tau^T F_s(\widetilde{Z}^{n-1}_s,\widetilde{Z}^n_s)ds - \int_\tau^T Z^n_s dB_s-\int_\tau^T\widetilde{Z}^n_s d\widetilde{B}_s,\quad\bQ\text{-a.s.,}
\end{align}
where $B$ and $\widetilde{B}$ are the Brownian motions under $\bQ$ introduced in Proposition \ref{change_measure}.
\end{prop}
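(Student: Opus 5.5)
Apply the generalized Itô–Kunita–Wentzell formula (Lemma \ref{Ito_kunita_formula}) to $u=v^n$ along the optimally controlled state $X$ of \eqref{SDE-optimalcontrolled}. Then rewrite the resulting drift so that it matches the driver $F_s$ under the measure $\bQ$ of Proposition \ref{change_measure}.

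First, read off the semimartingale decomposition of $v^n$ from \eqref{SHJB-Algorithm}. In the notation of Lemma \ref{Ito_kunita_formula},
\[
F(t,x)=-\Big(tr\big[\tfrac12(\sigma\sigma^{\cT}+\widetilde\sigma\widetilde\sigma^{\cT})D_x^2v^n+D_x\psi^n\sigma^{\cT}\big]+(b^{a^n})^{\cT}D_xv^n+f^{a^n}\Big)(t,x),
\]
with martingale integrand $\psi^n$. The regularity $v^n\in\cL^2_{\cP^W}(H^{2,2})\cap\cS^2_{\cP^W}(H^{1,2})$ and $\psi^n\in\cL^2_{\cP^W}(H^{1,2})$ gives $F\in\cL^2_{\cP^W}(L^2)$. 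Here one uses the boundedness of $b,f$ and of the derivatives of $\sigma,\widetilde\sigma$ from Assumption \ref{ass:continuity_bounded_integrability}, together with integrability of $f^{a^n}$ via $|f^{a^n}|\le|\sup_a f^a|+C$ locally. This last point is a mild technicality; if needed I would localize.

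Applying Lemma \ref{Ito_kunita_formula} between $\tau$ and $T$ with control $\alpha^*$, the second-order trace terms and the $D_x\psi^n\sigma^{\cT}$ term cancel. We are left with
\[
v^n(T,X_T)-v^n(\tau,X_\tau)=\int_\tau^T\big((b^{\alpha^*_s})^{\cT}D_xv^n-(b^{a^n})^{\cT}D_xv^n-f^{a^n}\big)(s,X_s)\,ds+\int_\tau^T Z^n_s\,dW_s+\int_\tau^T\widetilde Z^n_s\,d\widetilde W_s .
\]
Now substitute $d\widetilde W_s=d\widetilde B_s-\eta_s\,ds$ and $dW_s=dB_s$. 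Because $\widetilde\sigma\widetilde\sigma^{+}=I_d$, we have
\[
\widetilde Z^n_s\cdot\eta_s=(D_xv^n)^{\cT}\widetilde\sigma\widetilde\sigma^{+}b^{\alpha^*_s}=(b^{\alpha^*_s})^{\cT}D_xv^n
\]
at $(s,X_s)$. The $\alpha^*$ drift therefore cancels exactly, and the drift reduces to $-\big((b^{a^n})^{\cT}D_xv^n+f^{a^n}\big)(s,X_s)$. Using $v^n(T,\cdot)=G$, this yields $Y^n_\tau=G(X_T)+\int_\tau^T\big((b^{a^n})^{\cT}D_xv^n+f^{a^n}\big)(s,X_s)ds-\int Z^n dB-\int\widetilde Z^n d\widetilde B$.

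It remains to identify the drift with $F_s(\widetilde Z^{n-1}_s,\widetilde Z^n_s)$. Again by $\widetilde\sigma\widetilde\sigma^{+}=I_d$, $(\widetilde\sigma^{+}b^a)^{\cT}\widetilde\sigma^{\cT}D_xv=(b^a)^{\cT}D_xv$. Hence the Hamiltonian maximized in the policy improvement step, $a\mapsto (b^a)^{\cT}D_xv^{n-1}+f^a$, coincides with the one in \eqref{def:feedback_control_a(t,x,z)} evaluated at $\tilde z=\widetilde\sigma^{\cT}D_xv^{n-1}$. Choosing the argmax in the algorithm via the same measurable selector, we get $a^n(s,x)=\boldsymbol\alpha(s,x,(\widetilde\sigma^{\cT}D_xv^{n-1})(s,x))$. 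Along $X$ this reads $a^n(s,X_s)=\boldsymbol\alpha(s,X_s,\widetilde Z^{n-1}_s)$, and so the drift equals $F_s(\widetilde Z^{n-1}_s,\widetilde Z^n_s)$. For $n=1$ this uses the BSPDE for $v^0$; the case $a^0\equiv\bar a_0$ is not of the selector form, but the proposition concerns $n\ge1$.

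The main obstacle is justifying the Itô–Kunita–Wentzell step and the $\bQ$-a.s. identity. Lemma \ref{Ito_kunita_formula} is stated for a.e. $x$ and $\bP$-a.s., and we need it with $X_t=x$ a random initial point. I would handle this by using the continuity of $s\mapsto v^n(s,X_s)$, analogous to the statement recalled for $v$ from \cite{JQiu2018ViscosSolSHJBEqn}, and then transferring $\bP$-a.s. to $\bQ$-a.s. by the equivalence of $\bQ$ and $\bP$. One must also check that the stochastic integrals against $W,\widetilde W$ rewritten against $B,\widetilde B$ are well defined. This holds because $\eta$ is bounded, so the Girsanov rewriting is pathwise legitimate for integrands that are square integrable in time almost surely.
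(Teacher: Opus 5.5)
Your proposal is correct and follows the paper's proof. You apply the generalized It\^o--Kunita--Wentzell formula to $v^n$ along the $\alpha^*$-controlled state, cancel the trace terms, absorb the $(b^{\alpha^*_s})^{\cT}D_xv^n$ drift through the Girsanov shift $\eta$ using $\widetilde\sigma\widetilde\sigma^{+}=I_d$, and identify $a^n(s,X_s)=\boldsymbol{\alpha}(s,X_s,\widetilde Z^{n-1}_s)$ to obtain the driver $F_s(\widetilde Z^{n-1}_s,\widetilde Z^n_s)$. The paper does exactly this, only writing the drift as $F_s$ before the measure change rather than after, and it passes silently over the technicalities you flag (common measurable selector, random initial point, integrability of $f^{a^n}$).
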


\begin{proof}
    Let $(v^n, \psi^n)$ be the solution to the linear BSPDE \eqref{SHJB-Algorithm}. Recall that the corresponding feedback control is given by
	\begin{equation}
		a^n(s,x)=\arg\max_{a\in A} \Big(\big( (b^{a})^{\cT}D_xv^{n-1}\big)(s,x)+f^{a}(s,x)\Big)
		=\boldsymbol{\alpha}\Big(s,x, \big(\widetilde{\sigma}^\cT D_x v^{n-1}\big)(s,x)\Big).
	\end{equation}
By Assumptions \ref{ass:meas_Lipschitz_LinearGrowth} and \ref{ass:continuity_bounded_integrability}, the coefficients of the linear BSPDE \eqref{SHJB-Algorithm} satisfy the assumptions of Lemma \ref{Ito_kunita_formula}. 
Applying the generalized It\^o-Kunita-Wentzell formula (Lemma \ref{Ito_kunita_formula}) to $v^n(s,X_s)$, we obtain for $s\in[t,T]$,
\begin{align}\label{Ito-kunita}
	dY_s^n
	=&\,dv^n(s,X_s) \nonumber\\
	=&\, tr\Big[\frac{1}{2}(\sigma\sigma^\cT+\widetilde{\sigma}\widetilde{\sigma}^\cT)D^2_xv^n
    + D_x\psi^n\sigma^\cT\Big](s,X_s)ds 
    +((b^{\alpha_s^*})^{\cT}D_xv^n)(s,X_s)ds \nonumber\\
	&-tr\Big[\frac{1}{2}(\sigma\sigma^\cT+\widetilde{\sigma}\widetilde{\sigma}^\cT)D^2_xv^n+D_x\psi^n\sigma^\cT\Big](s,X_s)ds -((b^{a^n})^{\cT}D_xv^n)(s,X_s)ds \nonumber\\
	&-f^{a^n}(s,X_s)ds + (\psi^n+\sigma^\cT D_xv^n)(s,X_s)dW_s +\big(\widetilde{\sigma}^\cT D_xv^n\big) (s,X_s)d\widetilde{W}_s \nonumber\\
	=&\, \Big(((b^{\alpha_s^*})^{\cT}D_xv^n)(s,X_s) -((b^{a^n})^{\cT}D_xv^n)(s,X_s) -f^{a^n}(s,X_s) \Big) ds \nonumber \\
	& + (\psi^n+\sigma^\cT D_xv^n)(s,X_s)dW_s +\big(\widetilde{\sigma}^\cT D_xv^n \big)(s,X_s) d\widetilde{W}_s .
\end{align}
Recalling the definitions in \eqref{v^n:defn}, we obtain, for every $\tau\in[t,T]$,
\begin{align}\label{BSDE_vn_under_P}
	Y^n_\tau &= G(X_T) -\int_\tau^T \Big[\big(\widetilde{\sigma}^{+}b^{\alpha^*_s}\big)^\cT(s,X_s)\widetilde{Z}^n_s -F_s(\widetilde{Z}^{n-1}_s,\widetilde{Z}^n_s)\Big]ds - \int_\tau^T Z^n_s dW_s
    \notag\\
    &\quad
    -\int_\tau^T \widetilde{Z}^n_s d\widetilde{W}_s.
\end{align}
By Proposition~\ref{change_measure}, 
\[
B_t=W_t,\qquad 
\widetilde B_t=\widetilde W_t+\int_0^t\big(\widetilde{\sigma}^{+}b^{\alpha_s^*}\big)(s,X_s)\,ds
\]
are two \((\bF,\bQ)\)-Brownian motions. Consequently, \eqref{BSDE_vn_under_P} can be rewritten as
\begin{align*}
	Y^n_\tau = G(X_T) + \int_\tau^T F_s(\widetilde{Z}^{n-1}_s,\widetilde{Z}^n_s)ds - \int_\tau^T Z^n_s dB_s-\int_\tau^T\widetilde{Z}^n_s d\widetilde{B}_s.
\end{align*}
This is \eqref{BSDE_v^n_alg}, and the proof is complete.
\end{proof}

\begin{prop}\label{prop:semilinearSHJB2BSDE}
Suppose that Assumptions \ref{ass:meas_Lipschitz_LinearGrowth}--\ref{ass:feedback_control_lipschitz} hold.
Then the SHJB equation \eqref{SHJB-main} admits a unique solution 
$(w,\psi)\in\big(\cL_{\cP^W}^2(\bP, H^{2,2})\cap \cS_{\cP^W}^2(\bP, H^{1,2})\big)\times \cL_{\cP^W}^2(\bP, H^{1,2})$ with $w$ coinciding with the value function $v$ 
of the stochastic control problem \eqref{control_problem}. 
Moreover, for $s\in[t,T]$, set
\begin{equation}\label{change_of_variable_w}
	Y_s=w(s,X_s),\quad 
    \widetilde{Z}_s=\big(\widetilde{\sigma}^\cT D_x w\big)(s,X_s),
     \quad
     Z_s=\big(\psi+\sigma^\cT D_x w\big)(s,X_s). 
\end{equation}
Then the triplet $(Y,Z,\widetilde{Z})$ satisfies the BSDE
\begin{align}\label{BSDE_v_valuefnc}
	Y_{\tau}&= G(X_T) + \int_{\tau}^T F_s (\widetilde{Z}_s , \widetilde{Z}_s)ds - \int_{\tau}^T Z_s dB_s-\int_{\tau}^T \widetilde{Z}_s d\widetilde{B}_s,\quad\bQ\text{-a.s. for } \tau\in[t,T].
\end{align}
\end{prop}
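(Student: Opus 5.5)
The proof has two parts: identifying $w$ with the value function, and then deriving the BSDE \eqref{BSDE_v_valuefnc} along the optimally controlled state, in the same way as Proposition \ref{prop:linearBSPDE2BSDE}.

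\emph{Step 1: existence, uniqueness and identification.} Under Assumptions \ref{ass:meas_Lipschitz_LinearGrowth}--\ref{ass:nondegeneracy/parabolicity}, Lemma \ref{wellpossed_SHJB} gives a unique strong solution $(w,\psi)$ of \eqref{SHJB-main} in the stated spaces. The feedback process $\alpha^*$ in \eqref{optimal_feedback_control_law} attains the supremum of $H(s,X_s,D_xw,\cdot)$, because
\[
(b^a)^\cT D_xw+f^a=(\widetilde\sigma^{+}b^a)^\cT(\widetilde\sigma^\cT D_xw)+f^a .
\]
This identity holds since $\widetilde\sigma\widetilde\sigma^{+}=I_d$, which follows from the full row rank of $\widetilde\sigma$. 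By Assumption \ref{ass:feedback_control_lipschitz}, $\alpha^*\in\cA$, so the verification theorem, Lemma \ref{verification_theorem}, yields $w=v$.

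\emph{Step 2: Itô--Kunita--Wentzell along $X$.} Apply Lemma \ref{Ito_kunita_formula} to $u=w$, with the drift given by
\[
F=-\Big(tr\big[\tfrac12(\sigma\sigma^\cT+\widetilde\sigma\widetilde\sigma^\cT)D_x^2w+D_x\psi\sigma^\cT\big]+\widetilde H(\cdot,\cdot,D_xw)\Big).
\]
One must check that $F\in\cL^2_{\cP^W}(L^2)$. This follows from $w\in\cL^2(H^{2,2})$, $\psi\in\cL^2(H^{1,2})$, the bounds on $\sigma,\widetilde\sigma$, and the Lipschitz property of $\widetilde H$ in $p$ together with $\widetilde H(\cdot,\cdot,0)\in\cL^2(H^{1,2})$.

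The trace terms cancel, which leaves
\[
dY_s=\Big((b^{\alpha^*_s})^\cT D_xw-\widetilde H(s,X_s,D_xw)\Big)(s,X_s)\,ds+Z_s\,dW_s+\widetilde Z_s\,d\widetilde W_s .
\]

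\emph{Step 3: identify the driver.} Using the identity from Step 1 and the definition \eqref{def:feedback_control_a(t,x,z)} of $\boldsymbol\alpha$, we get
\[
\widetilde H(s,X_s,D_xw(s,X_s))=(\widetilde\sigma^{+}b^{\boldsymbol\alpha(s,X_s,\widetilde Z_s)})^\cT\widetilde Z_s+f^{\boldsymbol\alpha(s,X_s,\widetilde Z_s)}=F_s(\widetilde Z_s,\widetilde Z_s).
\]
In the same way, $(b^{\alpha^*_s})^\cT D_xw=\eta_s^\cT\widetilde Z_s$, where $\eta$ is as in Proposition \ref{change_measure}. Integrating from $\tau$ to $T$ and using $w(T,\cdot)=G$ gives
\[
Y_\tau=G(X_T)+\int_\tau^T\big(F_s(\widetilde Z_s,\widetilde Z_s)-\eta_s^\cT\widetilde Z_s\big)\,ds-\int_\tau^T Z_s\,dW_s-\int_\tau^T\widetilde Z_s\,d\widetilde W_s .
\]
Substituting $dB=dW$ and $d\widetilde B=d\widetilde W+\eta\,ds$ from Proposition \ref{change_measure} absorbs the $\eta_s^\cT\widetilde Z_s$ term. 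Since $\bQ\sim\bP$, the identity holds $\bQ$-a.s., which is \eqref{BSDE_v_valuefnc}.

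\emph{Main obstacle.} The delicate point is Step 2. The formula must be evaluated at the random, $\cF_t$-measurable initial point $x$. Moreover, the driver contains the nonlinear Hamiltonian evaluated at $D_xw$, which is only a Sobolev-class quantity. So we need that $F\in\cL^2(L^2)$ and that the compositions with $X_s$ are well defined, i.e.\ do not depend on the version chosen. For the latter I would rely on the nondegeneracy in Assumption \ref{ass:nondegeneracy/parabolicity}: it makes the law of $X_s$ absolutely continuous, so modifications on Lebesgue-null sets do not affect the compositions. This is exactly the setting in which Lemma \ref{Ito_kunita_formula} is formulated.
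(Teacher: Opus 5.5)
Your proposal is correct and follows the paper's proof essentially step for step. The steps match: well-posedness from Lemma \ref{wellpossed_SHJB}, the identification $w=v$ via the verification theorem, the It\^o--Kunita--Wentzell formula along the optimally controlled state, rewriting the supremum as $F_s(\widetilde Z_s,\widetilde Z_s)$ using $\widetilde\sigma\widetilde\sigma^{+}=I_d$, and the Girsanov substitution that cancels the $\eta_s^\cT\widetilde Z_s$ drift. Your additional checks are details the paper leaves implicit: that $\alpha^*$ attains the supremum, so that the verification theorem applies, and that the drift lies in $\cL^2_{\cP^W}(L^2)$, as required by Lemma \ref{Ito_kunita_formula}.
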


\begin{proof}
By Lemma \ref{wellpossed_SHJB}, the SHJB equation \eqref{SHJB-main} admits a unique (strong) solution
\[
(w,\psi)\in
\big(\cL_{\cP^W}^2(\bP,H^{2,2})\cap \cS_{\cP^W}^2(\bP,H^{1,2})\big)
\times \cL_{\cP^W}^2(\bP,H^{1,2}).
\]
The verification theorem, Lemma \ref{verification_theorem}, then identifies $w$ with the value function $v$ of \eqref{control_problem}. It remains to derive the BSDE representation.

% For notational brevity, write
% \[
% \mathcal L w:=
% tr\Big[\frac{1}{2}\big(\sigma\sigma^T+\widetilde{\sigma}\widetilde{\sigma}^T\big)D_x^2w+\sigma D_x\psi\Big].
% \]
The solution $(w,\psi)$ satisfies, for a.e. $(\omega,x)$ and all $\tau\in[t,T]$,
\begin{align*}
     w(\tau,x)&
     =G(x)
     +\int_\tau^T \Big(
    \text{tr}\Big[\frac{1}{2}\big(\sigma\sigma^\cT+\widetilde{\sigma}\widetilde{\sigma}^\cT\big)D_x^2w+ D_x\psi\sigma^\cT\Big]
+\sup_{a\in A}((b^a)^\cT D_xw+f^a)\Big)(s,x)\,ds\\
&\quad
-\int_\tau^T\psi(s,x)\,dW_s.
\end{align*}
By Assumptions \ref{ass:meas_Lipschitz_LinearGrowth} and \ref{ass:continuity_bounded_integrability}, 
 the generalized It\^o-Kunita-Wentzell formula, Lemma \ref{Ito_kunita_formula}, applies to the random field $w$ along the optimally controlled state process $X$. Using the definitions in \eqref{change_of_variable_w}, we obtain
\begin{align}\label{BSDE-under-P}
	dY_\tau
	&=
	\Big[
		((b^{\alpha_\tau^*})^\cT D_xw)(\tau,X_\tau)
		-\sup_{a\in A}((b^a)^\cT D_xw+f^a)(\tau,X_\tau)
	\Big]\,d\tau
	+Z_\tau\,dW_\tau+\widetilde Z_\tau\,d\widetilde W_\tau .
\end{align}

Since $\widetilde Z_\tau=(\widetilde\sigma^\cT D_xw)(\tau,X_\tau)$ and 
\[
\alpha_\tau^*
=\boldsymbol{\alpha}(\tau,X_\tau,\widetilde Z_\tau),
\]
the definition of the feedback selector \eqref{def:feedback_control_a(t,x,z)} gives
\[
\sup_{a\in A}((b^a)^\cT D_xw+f^a)(\tau,X_\tau)
=(b^{\alpha_\tau^*})^\cT D_xw(\tau,X_\tau)+f^{\alpha_\tau^*}(\tau,X_\tau)
=F_\tau(\widetilde Z_\tau,\widetilde Z_\tau).
\]
Moreover, as \(\widetilde\sigma\widetilde\sigma^+=I_d\), it follows that
\[
((b^{\alpha_\tau^*})^\cT D_xw)(\tau,X_\tau)
=\big(\widetilde\sigma^{+}b^{\alpha_\tau^*}\big)^\cT (\tau,X_\tau)\widetilde Z_\tau .
\]
Thus, integrating \eqref{BSDE-under-P} from $\tau$ to $T$ and using $Y_T=w(T,X_T)=G(X_T)$, we obtain
\begin{align}\label{BSDE-value-under-P}
	Y_\tau
	&=G(X_T)
	-\int_\tau^T
	\Big[
		\big(\widetilde\sigma^{+}b^{\alpha_s^*}\big)^\cT (s,X_s)\widetilde Z_s
		-F_s(\widetilde Z_s,\widetilde Z_s)
	\Big]\,ds \nonumber\\
	&\quad
	-\int_\tau^T Z_s\,dW_s
	-\int_\tau^T \widetilde Z_s\,d\widetilde W_s .
\end{align}

By Proposition \ref{change_measure},
\[
B_s=W_s,\qquad
d\widetilde B_s=d\widetilde W_s+
\big(\widetilde\sigma^{+} b^{\alpha_s^*}\big)(s,X_s)\,ds
\]
under $\bQ$. Substituting $dW_s=dB_s$ and
$d\widetilde W_s=d\widetilde B_s-\big(\widetilde\sigma^{+} b^{\alpha_s^*}\big)(s,X_s)\,ds$
in \eqref{BSDE-value-under-P} cancels the additional drift term and yields
\[
Y_\tau
=G(X_T)+\int_\tau^T F_s(\widetilde Z_s,\widetilde Z_s)\,ds
-\int_\tau^T Z_s\,dB_s
-\int_\tau^T \widetilde Z_s\,d\widetilde B_s,
\]
for every $\tau\in[t,T]$, $\bQ$-almost surely. This is precisely \eqref{BSDE_v_valuefnc}.
\end{proof}

Here, we note that in the above, the processes $Y^n$, $Z^n$, and $\widetilde{Z}^n$ are parameterized by the initial condition $(t,x)$. For notational convenience, this dependence is omitted throughout, and we write $Y^n$, $Z^n$, and $\widetilde{Z}^n$ in place of $Y^{n;t,x}$, $Z^{n;t,x}$, and $\widetilde{Z}^{n;t,x}$. The same convention is adopted for $Y$, $Z$, and $\widetilde{Z}$.

To prove the convergence of the sequence $(v^n)_{n\in\bN}$ generated by the linear BSPDE \eqref{SHJB-Algorithm} toward the value function $v$, it suffices, in view of Proposition \ref{prop:linearBSPDE2BSDE} and \ref{prop:semilinearSHJB2BSDE}, to prove the convergence of the corresponding BSDE solutions $(Y^{n})_{n\in\bN}$ to $Y$, where $Y^{n}$ and $Y$ satisfy \eqref{BSDE_v^n_alg} and \eqref{BSDE_v_valuefnc}, respectively.
 We therefore begin by establishing the well-posedness of the BSDE \eqref{BSDE_v^n_alg}.  

Recall that the driver of the BSDEs \eqref{BSDE_v^n_alg} and \eqref{BSDE_v_valuefnc} is given by
\begin{equation}
	F_s(\tilde{z}_1,\tilde{z}_2):= \big(\widetilde{\sigma}^{+}b^{\boldsymbol{\alpha}(s,X_s,\tilde{z}_1)}\big)^\cT (s,X_s)\tilde{z}_2 +f^{\boldsymbol{\alpha}(s,X_s,\tilde{z}_1)}(s,X_s),
\end{equation}
 for $s\in[0,T],\;\tilde{z}_1,\tilde{z}_2\in \bR^m$. By Assumptions \ref{ass:meas_Lipschitz_LinearGrowth} and \ref{ass:continuity_bounded_integrability}, the process $F_{\cdot}(\tilde{z}_1,\tilde{z}_2)$ 
 takes values in $\bR$ and is $\cP^W-$measurable for every fixed  $\tilde{z}_1,\tilde{z}_2\in\bR^m$. 
 Moreover, since the control set $A$ is compact and $\boldsymbol{\alpha}(s,x,\tilde{z})$ is valued in $ A$, there exists a constant
$C_A>0$ such that
\[
|\boldsymbol{\alpha}(s,x,\tilde{z})|\le C_A,
\quad
(s,x,\tilde{z})\in[0,T]\times\bR^d\times\bR^m.
\]
Hence, using the linear growth condition of $f$ and Assumption \ref{ass:control_lipschitz_bound}, we have for all $s\in[0,T]$, $z_1,z_2,z'_2\in\bR^m$, $\bP\text{-a.s.}$,
 \begin{align*}
    |F_s(0,0)|&=|f(s,X_s,\boldsymbol{\alpha}(s,X_s,0))| \le K (1+C_A+|X_s|),
    \\
	|F_s(z_1,z_2)| & \le |F_s(0,0)|+ \theta |z_1|+\hat K|z_2|, 
    \\
    |F_s(z_1,z_2) -F_s(z_1,z'_2)|  & \le \hat K |z_2-z'_2|,
 \end{align*}
 where the constant $\hat K$ depends on $K$ and $\gamma$.

\begin{lem}\label{lem:wellposed_BSDE_vn}
Suppose that Assumptions \ref{ass:meas_Lipschitz_LinearGrowth}--\ref{ass:feedback_control_lipschitz} hold.
 Let $\widetilde{Z}^{n-1}\in \cL_{\cP}^2(\bQ,\bR^m,[t,T])$, for some $n\in \bN$. Then the BSDE \eqref{BSDE_v^n_alg} admits a unique adapted solution $(Y^n,Z^n,\widetilde{Z}^n)\in \big(\cS_{\cP}^2(\bQ,\bR,[t,T])\times \cL^2_{\cP}(\bQ,\bR^m,[t,T])\times \cL^2_{\cP}(\bQ,\bR^m,[t,T])\big)$.
 \end{lem}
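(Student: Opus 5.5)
The plan is to treat \eqref{BSDE_v^n_alg}, for fixed $n$ and given $\widetilde{Z}^{n-1}$, as a standard Lipschitz BSDE under $\bQ$ driven by the two-dimensional family of $(\bF,\bQ)$-Brownian motions $(B,\widetilde B)$ from Proposition \ref{change_measure}. Its driver is
\[
g_s(\tilde z):=F_s(\widetilde{Z}^{n-1}_s,\tilde z)=\big(\widetilde{\sigma}^{+}b^{\boldsymbol{\alpha}(s,X_s,\widetilde{Z}^{n-1}_s)}\big)^{\cT}(s,X_s)\tilde z+f^{\boldsymbol{\alpha}(s,X_s,\widetilde{Z}^{n-1}_s)}(s,X_s).
\]
Here $\widetilde{Z}^{n-1}$ is frozen, so $g$ is affine in $\tilde z$ and independent of $(y,z)$. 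We will then invoke the Pardoux--Peng existence and uniqueness theorem, or equivalently a Banach fixed point in the weighted space $\cS^2_{\cP}\times\cL^2_{\cP,\lambda}\times\cL^2_{\cP,\lambda}$ under $\bQ$.

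\textbf{Step 1 (measurability).} Since $\boldsymbol{\alpha}$ is $\cP^W\otimes\cB(\bR^d)\otimes\cB(\bR^m)$-measurable and $X$ and $\widetilde{Z}^{n-1}$ are $\cP$-measurable, the composition $s\mapsto\boldsymbol{\alpha}(s,X_s,\widetilde{Z}^{n-1}_s)$ is $\cP$-measurable and $A$-valued. Combined with the joint measurability of $b,f,\widetilde\sigma^+$ from Assumption \ref{ass:meas_Lipschitz_LinearGrowth}, this makes $g_\cdot(\tilde z)$ progressively measurable for each $\tilde z$.

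\textbf{Step 2 (Lipschitz property).} By the estimate $|F_s(z_1,z_2)-F_s(z_1,z_2')|\le\hat K|z_2-z_2'|$ established just before the lemma, $g$ is uniformly Lipschitz in $\tilde z$ with constant $\hat K$. This uses $|\widetilde\sigma^+|\le\gamma^{-1/2}$ from Assumption \ref{ass:nondegeneracy/parabolicity} and the boundedness of $b$ from Assumption \ref{ass:continuity_bounded_integrability}.

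\textbf{Step 3 (integrability under $\bQ$).} We need $G(X_T)\in L^2_{\cF_T}(\bQ)$ and $g_\cdot(0)\in\cL^2_{\cP}(\bQ,\bR,[t,T])$. This step is the main point requiring care, since the a priori bounds (Lemma \ref{wellp:FSDE}) are stated under $\bP$, not $\bQ$.
\begin{itemize}
\item For the terminal value, Assumption \ref{ass:continuity_bounded_integrability} gives $|G|\le K$, so $G(X_T)$ is bounded.
\item For the driver, $|g_s(0)|=|f^{\boldsymbol{\alpha}(\cdot)}(s,X_s)|\le K$, again by Assumption \ref{ass:continuity_bounded_integrability}, so integrability under $\bQ$ is immediate.
\item As a fallback if only linear growth were available, we would use $|g_s(0)|\le K(1+C_A+|X_s|)$ and Hölder's inequality with the density $d\bQ/d\bP$. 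Since $\eta$ is bounded by $\gamma^{-1/2}K$, this density has moments of all orders under $\bP$. Combining this with $\bE^\bP[\sup_s|X_s|^4]<\infty$ from Lemma \ref{wellp:FSDE} gives $\bE^\bQ\int_t^T|X_s|^2ds<\infty$.
\end{itemize}

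\textbf{Step 4 (conclusion).} A second subtlety is that $\bF$ may be strictly larger than the filtration generated by $(B,\widetilde B)$, so martingale representation is not automatically available. We handle this by working in the filtration generated by $(W,\widetilde W)$ augmented, which is also generated by $(B,\widetilde B)$ once the feedback is fixed. Alternatively, we adopt the paper's implicit standing convention that $\bF$ is Brownian.

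With this settled, the standard theorem yields a unique solution $(Y^n,Z^n,\widetilde Z^n)$ in the stated spaces. Concretely, the contraction argument proceeds as follows:
\begin{itemize}
\item Define the map $\Phi(U,V)$ by solving $Y_\tau=\bE^\bQ\big[G(X_T)+\int_\tau^Tg_s(V_s)ds\,\big|\,\cF_\tau\big]$ and reading off $(Z,\widetilde Z)$ from the martingale representation.
\item Apply Itô's formula to $e^{\lambda s}|\delta Y_s|^2$, which gives $\|\delta\widetilde Z\|_{\lambda}^2\le (\hat K^2/\lambda)\|\delta V\|^2_\lambda$.
\item Choose $\lambda>2\hat K^2$, which makes $\Phi$ a contraction.
\end{itemize}
The $\cS^2$ bound on $Y^n$ then follows from the Burkholder--Davis--Gundy inequality.
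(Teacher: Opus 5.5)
Your proposal is correct and follows essentially the paper's route. The paper omits the proof and defers to the standard Lipschitz-BSDE existence and uniqueness theorem (Ma--Yong, Theorem 4.2, Chapter 1); your checks that $\tilde z\mapsto F_s(\widetilde Z^{n-1}_s,\tilde z)$ is $\hat K$-Lipschitz, that $G(X_T)$ and $F_s(\widetilde Z^{n-1}_s,0)$ are bounded by $K$, and your weighted contraction estimate are exactly the hypotheses and calculation behind that citation.

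The martingale-representation issue you flag is one the paper passes over silently. Your claim that $(W,\widetilde W)$ and $(B,\widetilde B)$ generate the same augmented filtration does hold here, but you should give the reason rather than ``once the feedback is fixed''. Since $\widetilde\sigma\widetilde\sigma^{+}=I_d$, the drift cancels under $\bQ$, so $X$ is the strong solution of $dX_s=\sigma(s,X_s)\,dB_s+\widetilde\sigma(s,X_s)\,d\widetilde B_s$ with Lipschitz coefficients. Consequently $\eta$ and $\widetilde W=\widetilde B-\int_0^\cdot\eta_s\,ds$ are adapted to the filtration of $(B,\widetilde B)$.
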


In view of Proposition \ref{prop:semilinearSHJB2BSDE} and the boundedness of the spatial gradient of the value function $v$ in \eqref{value_fnc_gradient_bound},
we have $|\widetilde{Z}| \le M$ a.e., for some constant $M$ depending on $K$ and $T$. Notice that for $z_1,\tilde z, \tilde z'\in \bR^m$ with $|\tilde z'|\le M$, we have
\begin{align}
    &|F_s(z_1, \tilde z) -F_s(\tilde z', \tilde z')| 
    \notag \\
    &\le |(\widetilde{\sigma}^{+}b^{\boldsymbol{\alpha}(s,X_s,z_1)})^\cT (s,X_s)\tilde z 
        - (\widetilde{\sigma}^{+}b^{\boldsymbol{\alpha}(s,X_s,\tilde z')})^\cT (s,X_s)\tilde z'| + |f^{\boldsymbol{\alpha}(s,X_s,z_1)}(s,X_s) - f^{\boldsymbol{\alpha}(s,X_s,\tilde z')}(s,X_s)| 
    \notag\\
    &\le |(\widetilde{\sigma}^{+} b^{\boldsymbol{\alpha}(s,X_s,z_1)})(s,X_s)| \cdot |\tilde z - \tilde z'| + |b^{\boldsymbol{\alpha}(s,X_s,z_1)} - b^{\boldsymbol{\alpha}(s,X_s,\tilde z')}| \cdot |(\widetilde{\sigma}^{+})^\cT (s,X_s)\tilde z'| + \theta |z_1 - \tilde z'|
    \notag\\
    &\le \tilde K |\tilde z - \tilde z'| + \tilde K|z_1 - \tilde z'|,\quad \bQ \text{-a.s.}, \label{Lipschitz_driver}
\end{align}    
where $\tilde K$ depends on $\gamma$, $K$, $\theta$, and $M$. The above Lipschitz property of the driver $F$ will be used to establish the uniqueness of the solution $(Y,Z,\widetilde{Z})$ with bounded $\widetilde Z$ for the BSDE \eqref{BSDE_v_valuefnc}, 
while the existence of such a solution has been guaranteed by Proposition \ref{prop:semilinearSHJB2BSDE}.
 \begin{lem}\label{lem:wellposed_BSDE_v_valuefnc}
     Let Assumptions \ref{ass:meas_Lipschitz_LinearGrowth}--\ref{ass:feedback_control_lipschitz} hold.
      Then the BSDE \eqref{BSDE_v_valuefnc} admits a unique adapted solution $(Y,Z,\widetilde{Z})\in \big(\cS_{\cP}^2(\bQ,\bR,[t,T])\times \cL^2_{\cP}(\bQ,\bR^m,[t,T])\times \cL^2_{\cP}(\bQ,\bR^m,[t,T]) \big)$ 
      with   \\
      $\esssup_{(\omega,s)\in\Omega\times[t,T]}|\widetilde Z_s(\omega)|<\infty$.
 \end{lem}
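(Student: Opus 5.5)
Existence is already in hand: by Proposition \ref{prop:semilinearSHJB2BSDE}, the triplet $(Y,Z,\widetilde Z)$ defined in \eqref{change_of_variable_w} solves \eqref{BSDE_v_valuefnc} $\bQ$-a.s. The gradient bound \eqref{value_fnc_gradient_bound}, together with the boundedness of $\widetilde\sigma$ from Assumption \ref{ass:continuity_bounded_integrability}, gives $|\widetilde Z|\le M$ a.e. (after enlarging $M$). So it remains to check integrability and to prove uniqueness among solutions with bounded $\widetilde Z$.

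\emph{Integrability.} Since $\bQ\sim\bP$, the density $d\bQ/d\bP$ has moments of every order, because $\eta$ is uniformly bounded. Lemma \ref{wellp:FSDE} gives moments of $\sup_s|X_s|$ of every order under $\bP$. With H\"older's inequality these transfer to $\bQ$. Then:
\begin{itemize}
\item $G$ is bounded by Assumption \ref{ass:continuity_bounded_integrability}.
\item $|F_s(\widetilde Z_s,\widetilde Z_s)|\le K(1+C_A+|X_s|)+(\theta+\hat K)M$ is square integrable under $\bQ$.
\end{itemize}
Taking conditional $\bQ$-expectations of \eqref{BSDE_v_valuefnc} shows $Y\in\cS^2_\cP(\bQ)$, via Doob's inequality applied to the martingale part. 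The $Z$ integrability then follows from the standard a priori estimate: apply It\^o's formula to $|Y|^2$ and localize by stopping times, which gives $Z,\widetilde Z\in\cL^2_\cP(\bQ)$.

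\emph{Uniqueness.} Let $(Y',Z',\widetilde Z')$ be another solution in the same class with $|\widetilde Z'|\le M'$. The driver $\tilde z\mapsto F_s(\tilde z,\tilde z)$ is not globally Lipschitz, because the control component enters through $\boldsymbol\alpha(s,X_s,\tilde z)$ multiplied by $\tilde z$. It is, however, Lipschitz on bounded sets, by the computation leading to \eqref{Lipschitz_driver} with $M$ replaced by $\max(M,M')$. Concretely, for $|\tilde z|,|\tilde z'|\le \bar M$ we have
\[
|F_s(\tilde z,\tilde z)-F_s(\tilde z',\tilde z')|\le 2\tilde K|\tilde z-\tilde z'|,
\]
with $\tilde K$ depending on $\gamma,K,\theta,\bar M$.

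Set $\delta Y=Y-Y'$, and define $\delta Z$, $\delta\widetilde Z$ likewise. Apply It\^o's formula to $e^{\lambda s}|\delta Y_s|^2$ and take $\bQ$-expectations; the stochastic integrals are true martingales by the integrability established above. Young's inequality then gives
\[
\bE^\bQ\big[e^{\lambda\tau}|\delta Y_\tau|^2\big]+\bE^\bQ\!\int_\tau^T e^{\lambda s}\big(|\delta Z_s|^2+|\delta\widetilde Z_s|^2\big)ds
\le (4\tilde K^2-\lambda)\,\bE^\bQ\!\int_\tau^T e^{\lambda s}|\delta Y_s|^2 ds+\tfrac12\bE^\bQ\!\int_\tau^T e^{\lambda s}|\delta\widetilde Z_s|^2ds.
\]
Choosing $\lambda\ge 4\tilde K^2$ yields $\delta Y=0$, $\delta Z=0$ and $\delta\widetilde Z=0$. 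Gronwall's lemma can be used instead of the $\lambda$-weight.

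The main obstacle is precisely this lack of a global Lipschitz property of the driver. The proof relies on restricting to solutions with bounded $\widetilde Z$, which is exactly why that condition appears in the statement. The one remaining point to check is that $M$ in \eqref{Lipschitz_driver} may be replaced by $\max(M,M')$ with an adjusted constant; this is immediate from the computation there.
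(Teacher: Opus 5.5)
Your proposal is correct and follows the route the paper indicates. Existence and the bound on $\widetilde Z$ come from Proposition~\ref{prop:semilinearSHJB2BSDE} and \eqref{value_fnc_gradient_bound}, and uniqueness comes from the Lipschitz-type estimate \eqref{Lipschitz_driver} plus the standard $e^{\lambda s}$-weighted It\^o argument, for which the paper simply cites Ma--Yong.

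One correction to your commentary: since $\boldsymbol{\alpha}$ is a maximizer, $F_s(\tilde z,\tilde z)=\max_{a\in A}\big((\widetilde\sigma^{+}b^{a})^{\cT}(s,X_s)\tilde z+f^{a}(s,X_s)\big)$ is a maximum of affine maps with slopes bounded by $\gamma^{-1/2}K$, hence globally Lipschitz. So the diagonal driver is not the obstacle you describe, and uniqueness in fact holds in the whole class $\cS^2\times\cL^2\times\cL^2$. Boundedness of $\widetilde Z$ is what the two-argument estimate \eqref{Lipschitz_driver} needs later in the convergence proof, but your argument remains valid as written.
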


The preceding well-posedness results follow from calculations as in \cite[Theorem 4.2, Chapter 1]{JMaJYong2007FBSDEAndAppl} and we omit the proof.

\subsection{Convergence Estimates}

We next derive an a priori estimate for the solution of the iterative BSDE \eqref{BSDE_v^n_alg}. 

 %%%%%%%%%%%%%%%%%%%%%%%%%%%%%%%%%%%%

 \begin{lem}\label{lem:priori_bsde_Yn}
     Suppose that Assumptions \ref{ass:meas_Lipschitz_LinearGrowth}--\ref{ass:feedback_control_lipschitz} hold. Let $\widetilde Z^{0} \in \mathcal{L}^2_{\cP}(\bQ,\mathbb{R}^m,[t,T])$, and let $(Y^n, Z^n, \widetilde{Z}^n)$ denote the unique adapted solution to the BSDE \eqref{BSDE_v^n_alg} for $n\in\bN$. 
      Then there exist constants \(q\in(0,1/2)\), \(\lambda>1\), and \(C>0\), depending only on \(K,\theta, \gamma\), and \(T\), such that
     \begin{align}
    &\bE^\bQ\left[\sup_{\tau\in[t,T]}|Y^n_{\tau}|^2 \right] +\bE^\bQ \int_t^T e^{\lambda s} |Y^n_s|^2ds +\bE^\bQ \int_t^T e^{\lambda s}\left(|Z^n_s|^2+|\widetilde{Z}_s^n|^2\right)ds 
    \nonumber\\
    & \leq C\bE^\bQ  \Bigg[e^{\lambda T}|G(X_T)|^2  + \int_t^T e^{\lambda s} |F_s(0,0)|^2 ds \Bigg] +Cq^{n-1}\bE^\bQ \int_t^T e^{\lambda s}|\widetilde{Z}^0_s|^2ds.
\end{align}
 \end{lem}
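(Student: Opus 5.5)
We apply Itô's formula to $e^{\lambda s}|Y^n_s|^2$ under $\bQ$ on $[\tau,T]$, using the BSDE \eqref{BSDE_v^n_alg}. This gives
\begin{align*}
&e^{\lambda \tau}|Y^n_\tau|^2+\int_\tau^T e^{\lambda s}\big(\lambda|Y^n_s|^2+|Z^n_s|^2+|\widetilde Z^n_s|^2\big)ds\\
&= e^{\lambda T}|G(X_T)|^2+2\int_\tau^T e^{\lambda s}Y^n_sF_s(\widetilde Z^{n-1}_s,\widetilde Z^n_s)\,ds-2\int_\tau^T e^{\lambda s}Y^n_s\big(Z^n_s\,dB_s+\widetilde Z^n_s\,d\widetilde B_s\big).
\end{align*}
The stochastic integrals are true martingales, since $Y^n\in\cS^2_\cP$ and $Z^n,\widetilde Z^n\in\cL^2_\cP$ by Lemma \ref{lem:wellposed_BSDE_vn}. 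For the driver we use the bound $|F_s(z_1,z_2)|\le |F_s(0,0)|+\theta|z_1|+\hat K|z_2|$ stated before Lemma \ref{lem:wellposed_BSDE_vn}, together with Young's inequality with weights:
\[
2|Y||F|\le \Big(1+\tfrac{\theta}{\epsilon_1}+\tfrac{\hat K^2}{\epsilon_2}\Big)|Y|^2+|F_s(0,0)|^2+\theta\epsilon_1|\widetilde Z^{n-1}|^2+\epsilon_2|\widetilde Z^n|^2 .
\]
We take $\epsilon_2=1/2$ and choose $\lambda>1+\theta/\epsilon_1+2\hat K^2$, so that the $|Y|^2$ term is absorbed by $\lambda|Y|^2$ with a margin of at least $1$ left over. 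Taking $\bQ$-expectations then yields the one-step recursion
\[
\bE^\bQ\!\int_t^T e^{\lambda s}\big(|Y^n|^2+|Z^n|^2+\tfrac12|\widetilde Z^n|^2\big)ds\le \bE^\bQ\Big[e^{\lambda T}|G(X_T)|^2+\int_t^Te^{\lambda s}|F_s(0,0)|^2ds\Big]+\theta\epsilon_1\bE^\bQ\!\int_t^Te^{\lambda s}|\widetilde Z^{n-1}|^2ds.
\]

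Set $a_n:=\bE^\bQ\int_t^Te^{\lambda s}|\widetilde Z^n_s|^2ds$ and let $A_0$ be the data term. The recursion gives $a_n\le 2A_0+2\theta\epsilon_1 a_{n-1}$. We fix $\epsilon_1$ small enough that $q:=2\theta\epsilon_1\in(0,1/2)$, for instance $\epsilon_1=1/(8\theta)$, which gives $q=1/4$. The choice of $\lambda$ then depends only on $\theta,\hat K$, hence only on $K,\theta,\gamma$; a factor $e^{\lambda T}$ enters the final constant through $T$. Iterating the recursion gives
\[
a_n\le 2A_0\sum_{k\ge0}q^k+q^n a_0\le 4A_0+q^{n}a_0 .
\]
Substituting this back into the recursion bounds $\bE^\bQ\int e^{\lambda s}(|Y^n|^2+|Z^n|^2+|\widetilde Z^n|^2)ds$ by $CA_0+Cq^{n-1}a_0$. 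The exponent is stated as $q^{n-1}$ to stay consistent with the statement, since $a_{n-1}$ appears on the right-hand side of the recursion.

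It remains to bound $\bE^\bQ\sup_\tau|Y^n_\tau|^2$. We take the supremum in $\tau$ before taking expectations in the Itô identity and apply the Burkholder–Davis–Gundy inequality to the martingale term:
\[
2\,\bE^\bQ\sup_\tau\Big|\int_\tau^T e^{\lambda s}Y^n_s\big(Z^n_s\,dB_s+\widetilde Z^n_s\,d\widetilde B_s\big)\Big|\le \tfrac12\bE^\bQ\sup_s e^{\lambda s}|Y^n_s|^2+C\,\bE^\bQ\!\int_t^T e^{\lambda s}\big(|Z^n_s|^2+|\widetilde Z^n_s|^2\big)ds .
\]
The first term on the right is absorbed into the left-hand side, and the second is already controlled by the previous step. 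Since $e^{\lambda s}\ge1$, this gives the bound for $\bE^\bQ\sup|Y^n|^2$.

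The main obstacle is the bookkeeping of constants. The contraction factor $q$ must come out below $1/2$ while remaining independent of $n$, and this is exactly why the weight $\epsilon_1$ on the previous iterate has to be decoupled from the $\lambda$ used to absorb the $|Y|^2$ terms. One must also check that $\lambda>1$ and that all constants depend only on $K,\theta,\gamma,T$.
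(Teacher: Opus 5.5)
Your proof is correct and follows the same route as the paper. Both apply Itô's formula to $e^{\lambda s}|Y^n_s|^2$ and use Young's inequality with the linear-growth bound $|F_s(z_1,z_2)|\le|F_s(0,0)|+\theta|z_1|+\hat K|z_2|$. Both then take $\lambda$ large so that the dependence on $\widetilde Z^{n-1}$ becomes a contraction with ratio below $1/2$, sum the geometric series, and handle $\bE^\bQ\sup_\tau|Y^n_\tau|^2$ separately with a martingale inequality. The differences are cosmetic. The paper uses a single Young weight $\eps$ tied to $\lambda$ through $\lambda-(1+\theta+\hat K)/\eps=1$, so decoupling the weights is convenient but not necessary. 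For the supremum, the paper uses Doob's inequality and Itô's isometry on the BSDE itself rather than BDG on the Itô identity. One small slip: a margin of at least $1$ requires $\lambda\ge 2+\theta/\epsilon_1+2\hat K^2$, not just $\lambda>1+\theta/\epsilon_1+2\hat K^2$, which is harmless.
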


\begin{proof}
    Applying It\^o's formula to $e^{\lambda s}|Y^n_s|^2$, with $\lambda >0$ to be chosen later, gives, for every $\tau\in[t,T]$,
    \begin{align}
        e^{\lambda \tau}|Y^n_{\tau}|^2 &+ \int_{\tau}^T e^{\lambda s}\Big(|Z^n_s|^2+|\widetilde{Z}_s^n|^2\Big)ds = e^{\lambda T}|G(X_T)|^2 + 2\int_{\tau}^T e^{\lambda s} F_s(\widetilde{Z}^{n-1}_s, \widetilde{Z}^n_s)Y^n_s ds \nonumber\\
        & -\int_{\tau}^T \lambda e^{\lambda s}|Y^n_s|^2 ds -2\int_{\tau}^T e^{\lambda s} Z^n_s Y^n_s dB_s -2\int_{\tau}^T e^{\lambda s }\widetilde{Z}^n_s Y^n_s d\widetilde{B}_s,
    \end{align}
    $\bQ$-almost surely. Taking expectations and using linear growth for the driver $F$, we obtain
\begin{align}
        & \bE^\bQ \Big[e^{\lambda \tau}|Y^n_{\tau}|^2 \Big]+ \bE^\bQ \int_{\tau}^T e^{\lambda s}\Big(|Z^n_s|^2+|\widetilde{Z}_s^n|^2\Big)ds +\bE^\bQ \int_{\tau}^T \lambda e^{\lambda s}|Y^n_s|^2 ds \nonumber\\
        &  \leq \bE^\bQ \Big[e^{\lambda T}|G(X_T)|^2 \Big] + 2\bE^\bQ \int_{\tau}^T e^{\lambda s} |F_s(\widetilde{Z}^{n-1}_s, \widetilde{Z}^n_s)|\cdot |Y^n_s| ds  \nonumber\\
        & \leq \bE^\bQ \Big[e^{\lambda T}|G(X_T)|^2 \Big] + 2\bE^\bQ  \int_{\tau}^T e^{\lambda s} \Big(|F_s(0,0)| + \theta |\widetilde{Z}^{n-1}_s|+\hat K|\widetilde{Z}^n_s| \Big) |Y^n_s| ds.
    \end{align}
    Applying Young's inequality to the right-hand side, we obtain, for any $\eps>0$,
\begin{align}
        & \bE^\bQ \Big[e^{\lambda \tau}|Y^n_{\tau}|^2 \Big]+ \bE^\bQ \int_{\tau}^T e^{\lambda s}\left(|Z^n_s|^2+\left(1-\eps \hat K\right)|\widetilde{Z}_s^n|^2\right)ds +\big(\lambda-(1+{\theta}+\hat K)\frac{1}{\eps}\big)\bE^\bQ \int_{\tau}^T e^{\lambda s}|Y^n_s|^2 ds \nonumber\\
        & \leq \bE^\bQ \Big[e^{\lambda T}|G(X_T)|^2 \Big] 
        + \epsilon\bE^\bQ  \int_{\tau}^T e^{\lambda s} |F_s(0,0)|^2 ds 
        + \eps {\theta}\bE^\bQ  \int_{\tau}^T e^{\lambda s} |\widetilde{Z}^{n-1}_s|^2 ds.
    \end{align}
We now choose $\lambda>1$ sufficiently large so that 
$\widetilde{q}:=\max \left\{ \frac{(1+{\theta}+\hat K)\hat K}{\lambda -1} , \frac{(1+{\theta}+\hat K){\theta}}{\lambda -1}\right\}\in (0,1/3)$ and fix this value of $\lambda$. Next, choose $\eps$ so that 
$\lambda-(1+{\theta}+\hat K)\frac{1}{\eps}=1$. 
Since \[ \eps \hat K\le\widetilde q, \qquad \eps \theta\le\widetilde q, \] it follows that
\begin{align}
        & \bE^\bQ \Big[e^{\lambda \tau}|Y^n_{\tau}|^2 \Big]+ \bE^\bQ \int_{\tau}^T e^{\lambda s}\left(|Z^n_s|^2+\left(1-\widetilde{q}\right)|\widetilde{Z}_s^n|^2\right)ds +\bE^\bQ \int_{\tau}^T e^{\lambda s}|Y^n_s|^2 ds \nonumber\\
        & \leq \bE^\bQ \Big[e^{\lambda T}|G(X_T)|^2 \Big] + \frac{1+{\theta}+\hat K}{\lambda -1}\bE^\bQ  \int_{\tau}^T e^{\lambda s} |F_s(0,0)|^2 ds + \widetilde{q} \bE^\bQ  \int_{\tau}^T e^{\lambda s} |\widetilde{Z}^{n-1}_s|^2 ds.
    \end{align}
    
Since $\widetilde{q}\in (0,1/3)$, we have $(1-\widetilde{q})\in (2/3,1)$ and $q:=\frac{\widetilde{q}}{1-\widetilde{q}}\in (0,1/2) $. Dividing both sides of the preceding estimate by $1-\widetilde{q}$ yields
\begin{align}\label{est:priori_seq_1_1}
        & \bE^\bQ \Big[e^{\lambda \tau}|Y^n_{\tau}|^2 \Big]+ \bE^\bQ \int_{\tau}^T e^{\lambda s}\left(|Z^n_s|^2+|\widetilde{Z}_s^n|^2\right)ds +\bE^\bQ \int_{\tau}^T e^{\lambda s}|Y^n_s|^2 ds \nonumber\\
        & \leq \frac{3}{2}\bE^\bQ \Bigg[e^{\lambda T}|G(X_T)|^2  +\frac{1}{3\max\{\hat K,{\theta}\}}  \int_{\tau}^T e^{\lambda s} |F_s(0,0)|^2 ds \Bigg]+ q \bE^\bQ  \int_{\tau}^T e^{\lambda s} |\widetilde{Z}^{n-1}_s|^2 ds
        \end{align}
        Iterating the estimate \eqref{est:priori_seq_1_1} for the last term on the right-hand side yields
$$\sum_{k=0}^{n-1}q^k =\frac{1-q^n}{1-q}\le\frac{1}{1-q}<2,$$
since \(q\in(0,\frac12)\). Consequently,
        % Now using the above estimate for the last term on the right-hand side of \eqref{est:priori_seq_1_1} recurrently, we will have a geometric series with ratio $q\in(0,1/2)$. Finally, using the upper bound of the partial sum of the associated geometric series, we obtain the following estimate
        \begin{align}\label{est:priori_seq_1}
        & \bE^\bQ \Big[e^{\lambda \tau}|Y^n_{\tau}|^2 \Big]+ \bE^\bQ \int_{\tau}^T e^{\lambda s}\left(|Z^n_s|^2+|\widetilde{Z}_s^n|^2\right)ds +\bE^\bQ \int_{\tau}^T e^{\lambda s}|Y^n_s|^2 ds \nonumber\\
        & \leq 3\bE^\bQ \Bigg[e^{\lambda T}|G(X_T)|^2  +\frac{1}{3\max\{\hat K,{\theta}\}} \int_{\tau}^T e^{\lambda s} |F_s(0,0)|^2 ds \Bigg]+ q^{n} \bE^\bQ  \int_{\tau}^T e^{\lambda s} |\widetilde{Z}^{0}_s|^2 ds.
    \end{align}

On the other hand, from \eqref{BSDE_v^n_alg}, for every $\tau\in [t,T]$,
\begin{align}
    |Y^n_{\tau}| \leq |G(X_T)|+\Big|\int_{\tau}^T F_s(\widetilde{Z}^{n-1}_s, \widetilde{Z}^n_s)ds\Big| +\Big|\int_{\tau}^T Z^n_s dB_s\Big|+\Big|\int_{\tau}^T \widetilde{Z}^n_s d\widetilde{B}_s\Big|\quad \text{$\bQ$-a.s.}
\end{align}
Hence, using \((a_1+\cdots+a_4)^2\le 4(a_1^2+\cdots+a_4^2)\) and Cauchy's inequality, it holds that $\bQ$-a.s.,
\begin{align*}
    |Y^n_{\tau}|^2 \leq 4|G(X_T)|^2+4(T-\tau)\int_{\tau}^T \big|F_s(\widetilde{Z}^{n-1}_s, \widetilde{Z}^n_s)\big|^2ds +4\Big|\int_{\tau}^T Z^n_s dB_s\Big|^2+4\Big|\int_{\tau}^T \widetilde{Z}^n_s d\widetilde{B}_s\Big|^2.
\end{align*}
Taking the supremum over $\tau\in[t,T]$ and then expectations yields
\begin{align}\label{eqref_1}
    \bE^\bQ \left[\sup_{\tau\in[t,T]}|Y^n_{\tau}|^2 \right]&\leq 4\bE^\bQ |G(X_T)|^2+4(T-t)\bE^\bQ \int_t^T \big|F_s(\widetilde{Z}^{n-1}_s, \widetilde{Z}^n_s)\big|^2ds \nonumber\\
    &+4\bE^\bQ \left[ \sup_{\tau\in[t,T]}\Big|\int_{\tau}^T Z^n_s dB_s\Big|^2\right]+4\bE^\bQ  \left[\sup_{\tau\in [t,T]}\Big|\int_{\tau}^T \widetilde{Z}^n_s d\widetilde{B}_s\Big|^2\right].
\end{align}
By Doob's inequality and It\^o's isometry, we have
\begin{align}
    \bE^\bQ \left[ \sup_{\tau\in [t,T]} \Big|\int_{\tau}^T \widetilde{Z}^n_s d\widetilde{B}_s\Big|^2\right]\leq 4\bE^\bQ  \left[\Big|\int_t^T \widetilde{Z}^n_s d\widetilde{B}_s\Big|^2 \right] 
    = 4\bE^\bQ  \int_t^T |\widetilde{Z}^n_s|^2 ds.
\end{align}
and similarly,
\begin{align}
    \bE^\bQ \left[ \sup_{\tau\in [t,T]} \Big|\int_{\tau}^T Z^n_s dB_s\Big|^2\right]
    \leq
    4\bE^\bQ  \int_t^T |Z^n_s|^2 ds.
\end{align}
Using these estimates, the linear growth of the driver $F$, and relation \eqref{est:priori_seq_1}, it follows from \eqref{eqref_1} that
\begin{align*}
    &\bE^\bQ \left[\sup_{\tau\in[t,T]}|Y^n_{\tau}|^2 \right] \nonumber \\
    &\leq 4\bE^\bQ |G(X_T)|^2+4(T-t)\bE^\bQ \int_t^T \big|F_s(\widetilde{Z}^{n-1}_s, \widetilde{Z}^n_s)\big|^2ds  
      +16\bE^\bQ  \int_t^T \left(|Z^n_s|^2+|\widetilde{Z}^n_s|^2\right) ds\nonumber\\
    & \leq C\bE^\bQ  \Bigg[e^{\lambda T}|G(X_T)|^2  + \int_t^T e^{\lambda s} |F_s(0,0)|^2 ds \Bigg] +Cq^{n-1}\bE^\bQ \int_t^T e^{\lambda s} |\widetilde{Z}^0_s|^2ds,
\end{align*}
which together with \eqref{est:priori_seq_1}   yields the desired estimate.
\end{proof}

We proceed to establish the convergence of the sequence $(Y^n,Z^n,\widetilde{Z}^n)_{n\in\bN}$, defined as the successive solutions of the iterative BSDE \eqref{BSDE_v^n_alg}, toward the unique adapted solution $(Y,Z,\widetilde{Z})$ of the nonlinear BSDE \eqref{BSDE_v_valuefnc}.

\begin{thm}\label{thm:main_convergence_bsde}
    Suppose that Assumptions \ref{ass:meas_Lipschitz_LinearGrowth}--\ref{ass:feedback_control_lipschitz} hold. 
    Let $(Y,Z, \widetilde{Z})\in \cS_{\cP}^2(\bQ,\bR,[t,T])\times \cL_{\cP}^2(\bQ,\bR^m,[t,T])\times \cL_{\cP}^2(\bQ,\bR^m,[t,T])$ be the unique adapted solution of the BSDE \eqref{BSDE_v_valuefnc} in Lemma \ref{lem:wellposed_BSDE_v_valuefnc}. 
    Then the sequence $(Y^n,Z^n, \widetilde{Z}^n)_{n\in\bN}$, where $(Y^n,Z^n, \widetilde{Z}^n)$ is the unique solution of \eqref{BSDE_v^n_alg}, converges to $(Y,Z, \widetilde{Z})$ in $\cS_{\cP}^2(\bQ,\bR,[t,T])\times \cL_{\cP}^2(\bQ,\bR^m,[t,T])\times \cL_{\cP}^2(\bQ,\bR^m,[t,T])$. 
    More precisely, there exist constants $C>0$ and $q\in \Big(0, \frac{1}{2}\Big)$ depending only on $K$,$\gamma$, $\theta$, and $T$, such that
    \begin{align}
         \bE^{\bQ} \Big[ \sup_{\tau\in[t,T]}|Y^n_{\tau}-Y_{\tau}|^2+\int_t^T\left(|\widetilde{Z}^n_s-\widetilde{Z}_s|^2+|Z^n_s-Z_s|^2\right)ds\Big]
         \leq C q^{n-1} \bE^{\bQ} \int_t^T|\widetilde{Z}^0_s-\widetilde{Z}_s|^2ds \rightarrow 0,
    \end{align}
    as $n$ tends to $\infty$. 
\end{thm}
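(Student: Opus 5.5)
We argue with the differences $\delta Y^n:=Y^n-Y$, $\delta Z^n:=Z^n-Z$, $\delta\widetilde Z^n:=\widetilde Z^n-\widetilde Z$, mirroring the proof of Lemma \ref{lem:priori_bsde_Yn}. Subtracting \eqref{BSDE_v_valuefnc} from \eqref{BSDE_v^n_alg} gives, $\bQ$-a.s. for $\tau\in[t,T]$,
\begin{align*}
\delta Y^n_\tau=\int_\tau^T\Big(F_s(\widetilde Z^{n-1}_s,\widetilde Z^n_s)-F_s(\widetilde Z_s,\widetilde Z_s)\Big)ds-\int_\tau^T\delta Z^n_s\,dB_s-\int_\tau^T\delta\widetilde Z^n_s\,d\widetilde B_s ,
\end{align*}
with zero terminal value. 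Since $|\widetilde Z|\le M$ (Lemma \ref{lem:wellposed_BSDE_v_valuefnc} and \eqref{value_fnc_gradient_bound}), the Lipschitz estimate \eqref{Lipschitz_driver} applies with $z_1=\widetilde Z^{n-1}_s$, $\tilde z=\widetilde Z^n_s$, $\tilde z'=\widetilde Z_s$. This yields the key pointwise bound
\begin{align*}
\big|F_s(\widetilde Z^{n-1}_s,\widetilde Z^n_s)-F_s(\widetilde Z_s,\widetilde Z_s)\big|\le \tilde K|\delta\widetilde Z^n_s|+\tilde K|\delta\widetilde Z^{n-1}_s| .
\end{align*}
This bound is where the boundedness of $\widetilde Z$ is used. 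It is the crucial structural point, because the term $b^{\boldsymbol\alpha(z_1)}$ multiplies $\tilde z'$, and this product is only Lipschitz in $z_1$ when $\tilde z'$ is bounded.

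Next we apply It\^o's formula to $e^{\lambda s}|\delta Y^n_s|^2$ and take $\bQ$-expectations; the stochastic integrals are true martingales by the $\cS^2\times\cL^2$ integrability from Lemmas \ref{lem:wellposed_BSDE_vn} and \ref{lem:wellposed_BSDE_v_valuefnc}. Using Young's inequality $2\tilde K|\delta\widetilde Z^{k}||\delta Y^n|\le \eps\tilde K|\delta\widetilde Z^k|^2+\eps^{-1}\tilde K|\delta Y^n|^2$ for $k=n,n-1$, we obtain
\begin{align*}
\bE^\bQ\!\int_\tau^T\! e^{\lambda s}\big(|\delta Z^n_s|^2+(1-\eps\tilde K)|\delta\widetilde Z^n_s|^2\big)ds+\big(\lambda-2\tilde K\eps^{-1}\big)\bE^\bQ\!\int_\tau^T\! e^{\lambda s}|\delta Y^n_s|^2ds\le \eps\tilde K\,\bE^\bQ\!\int_\tau^T\! e^{\lambda s}|\delta\widetilde Z^{n-1}_s|^2ds .
\end{align*}
We choose $\eps$ with $\eps\tilde K=\widetilde q\in(0,1/3)$, and then $\lambda$ large enough that $\lambda-2\tilde K\eps^{-1}\ge 1$. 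Dividing by $1-\widetilde q$ gives the contraction
\[
\|\delta Z^n\|^2_\lambda+\|\delta\widetilde Z^n\|^2_\lambda+\|\delta Y^n\|^2_\lambda\le q\,\|\delta\widetilde Z^{n-1}\|^2_\lambda,\qquad q:=\widetilde q/(1-\widetilde q)\in(0,1/2),
\]
where the norms are weighted $\cL^2_{\cP,\lambda}(\bQ)$ norms on $[t,T]$. Iterating this $n-1$ times bounds the left-hand side by $q^{n-1}\|\delta\widetilde Z^{1}\|^2_\lambda$, and one further step bounds it by $q^{n}\|\delta\widetilde Z^0\|^2_\lambda$. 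Because $e^{\lambda s}\in[1,e^{\lambda T}]$, this is at most $e^{\lambda T}q^{n-1}\bE^\bQ\int_t^T|\widetilde Z^0_s-\widetilde Z_s|^2ds$. Here $\lambda$, and hence $C$ and $q$, depend only on $\tilde K$, which depends on $K,\gamma,\theta,M$, with $M$ depending on $K,T$.

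The supremum term is handled as at the end of Lemma \ref{lem:priori_bsde_Yn}. We bound $|\delta Y^n_\tau|^2$ by $3(T-t)\int_t^T|\Delta F_s|^2ds$ plus the squared sup of the two stochastic integrals. Doob's inequality and It\^o's isometry control the stochastic integrals, and the driver bound above gives $\int|\Delta F|^2\le 2\tilde K^2\int(|\delta\widetilde Z^n|^2+|\delta\widetilde Z^{n-1}|^2)$. Both of these are already bounded by $Cq^{n-2}$ times the initial quantity from the contraction. The apparent loss of one power of $q$ is harmless: absorbing the factor $q^{-1}$ would make $C$ blow up as $q\to0$, so we instead bound $\|\delta\widetilde Z^{n-1}\|^2_\lambda\le q^{n-1}\|\delta\widetilde Z^0\|^2_\lambda$ directly from the iteration. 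This gives the stated $Cq^{n-1}$ rate.

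The main obstacle is the first step, obtaining a Lipschitz bound for the driver in its first argument that is uniform. The paper's \eqref{Lipschitz_driver} resolves it via the bound $|\widetilde Z|\le M$, so the only care needed is in using the right ordering of arguments: $\widetilde Z$, not $\widetilde Z^n$, must play the role of the bounded $\tilde z'$.
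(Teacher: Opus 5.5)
Your proposal is correct and follows essentially the same route as the paper. It takes the difference BSDE and uses the Lipschitz bound \eqref{Lipschitz_driver} with the bounded $\widetilde Z$ as $\tilde z'$, then applies It\^o's formula to $e^{\lambda s}|\delta Y^n_s|^2$ with Young's inequality to get the one-step contraction with $q=\widetilde q/(1-\widetilde q)\in(0,1/2)$, iterates it, and finishes with a Doob/It\^o-isometry upgrade to the $\cS^2$ norm. The differences are cosmetic (unconditional rather than conditional expectations, fixing $\eps$ before $\lambda$), and your sharper iterate $q^{n}\|\delta\widetilde Z^0\|_\lambda^2$ handles the $\|\delta\widetilde Z^{n-1}\|$ term in the supremum estimate a bit more cleanly than the paper's $q^{n-1}$ bookkeeping.
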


%%%%%%%%

\begin{proof}
    We compare the iterative BSDE directly with the nonlinear BSDE. Set
    \[
        \Delta Y^n:=Y^n-Y,\qquad
        \Delta Z^n:=Z^n-Z,\qquad
        \Delta \widetilde{Z}^n:=\widetilde{Z}^n-\widetilde{Z},
    \]
    and
    \[
        \Delta F^n_s
        :=F_s(\widetilde{Z}^{n-1}_s,\widetilde{Z}^n_s)
        -F_s(\widetilde{Z}_s,\widetilde{Z}_s).
    \]
    Since $\widetilde{Z}$ is essentially bounded, we may apply \eqref{Lipschitz_driver} with
    $z_1=\widetilde{Z}^{n-1}_s$, $\tilde z=\widetilde{Z}^n_s$, and
    $\tilde z'=\widetilde{Z}_s$, and we have
    \[
        |\Delta F^n_s|
        \leq \tilde K|\Delta\widetilde{Z}^n_s|
        +\tilde K |\Delta\widetilde{Z}^{n-1}_s|,
        \qquad s\in[t,T],\quad \bQ\text{-a.s.}
    \]
    Applying It\^o's formula to $e^{\lambda s}|\Delta Y^n_s|^2$ and taking conditional expectations with respect to $\cF_\tau$, for $\tau\in[t,T]$, yield
    \begin{align}
        &\bE_{\cF_\tau}^{\bQ}\bigg[
        e^{\lambda \tau}|\Delta Y^n_\tau|^2
        +\lambda\int_\tau^T e^{\lambda s}|\Delta Y^n_s|^2ds
        +\int_\tau^T e^{\lambda s}\Big(|\Delta Z^n_s|^2+|\Delta\widetilde{Z}^n_s|^2\Big)ds
        \bigg]\nonumber\\
        &\leq
        2\bE_{\cF_\tau}^{\bQ}\int_\tau^T e^{\lambda s}|\Delta F^n_s|\cdot|\Delta Y^n_s|ds \nonumber\\
        &\leq
        2\bE_{\cF_\tau}^{\bQ}\int_\tau^T e^{\lambda s}
        \Big(\tilde K|\Delta\widetilde{Z}^n_s|
        +\tilde K|\Delta\widetilde{Z}^{n-1}_s|\Big)
        |\Delta Y^n_s|ds .
    \end{align}
    By Young's inequality, for every $\varepsilon>0$,
    \begin{align}
        &\bE_{\cF_\tau}^{\bQ}\bigg[
        e^{\lambda \tau}|\Delta Y^n_\tau|^2
        +\Big(\lambda-\frac{2 \tilde K}{\varepsilon}\Big)
        \int_\tau^T \!\! e^{\lambda s}|\Delta Y^n_s|^2ds
        +\int_\tau^T\!\! e^{\lambda s}|\Delta Z^n_s|^2ds 
        % +\int_\tau^T e^{\lambda s}|\Delta\widetilde{Z}^n_s|^2ds
        % \bigg]
        % \nonumber\\
        % &\hspace{3.5cm}
        +\left(1-\varepsilon \tilde K\right)
        \int_\tau^T e^{\lambda s}|\Delta\widetilde{Z}^n_s|^2ds
        \bigg]\nonumber\\
        &\leq
        \varepsilon \tilde K\,
        \bE_{\cF_\tau}^{\bQ}\int_\tau^T e^{\lambda s}
        |\Delta\widetilde{Z}^{n-1}_s|^2ds .
    \end{align}
    Choose $\lambda>1$ so large that
    \[
        \bar q:=
        \frac{2\tilde K^2}{\lambda-1} \in(0,1/3),
    \]
    and then choose $\varepsilon$ such that
    $\lambda- 2 \tilde K /\varepsilon=1$. Then
    $\varepsilon\tilde K\leq \bar q$. Hence,
    \begin{align}
        &\bE_{\cF_\tau}^{\bQ}\bigg[
        e^{\lambda \tau}|\Delta Y^n_\tau|^2
        +\int_\tau^T e^{\lambda s}|\Delta Y^n_s|^2ds
        +\int_\tau^T e^{\lambda s}|\Delta Z^n_s|^2ds
        +(1-\bar q)\int_\tau^T e^{\lambda s}|\Delta\widetilde{Z}^n_s|^2ds
        \bigg]\nonumber\\
        &\leq
        \bar q\,
        \bE_{\cF_\tau}^{\bQ}\int_\tau^T e^{\lambda s}
        |\Delta\widetilde{Z}^{n-1}_s|^2ds .
    \end{align}
    Since $1-\bar q\in(2/3,1)$, setting $q:=\bar q/(1-\bar q)\in(0,1/2)$ gives
    \begin{align}\label{est:direct_step_contraction}
        &\bE_{\cF_\tau}^{\bQ}\bigg[
        e^{\lambda \tau}|\Delta Y^n_\tau|^2
        +\int_\tau^T e^{\lambda s}|\Delta Y^n_s|^2ds
        +\int_\tau^T e^{\lambda s}
        \Big(|\Delta Z^n_s|^2+|\Delta\widetilde{Z}^n_s|^2\Big)ds
        \bigg]\nonumber\\
        &\leq
        q\,\bE_{\cF_\tau}^{\bQ}\int_\tau^T e^{\lambda s}
        |\Delta\widetilde{Z}^{n-1}_s|^2ds .
    \end{align}
    Iterating \eqref{est:direct_step_contraction} yields, for every $n\in\bN$,
    \begin{align}\label{est:direct_iterated_contraction}
        &\bE_{\cF_\tau}^{\bQ}\bigg[
        e^{\lambda \tau}|\Delta Y^n_\tau|^2
        +\int_\tau^T e^{\lambda s}|\Delta Y^n_s|^2ds
        +\int_\tau^T e^{\lambda s}
        \Big(|\Delta Z^n_s|^2+|\Delta\widetilde{Z}^n_s|^2\Big)ds
        \bigg]\nonumber\\
        &\leq
        q^{n-1}\,\bE_{\cF_\tau}^{\bQ}\int_\tau^T e^{\lambda s}
        |\widetilde{Z}^{0}_s-\widetilde{Z}_s|^2ds .
    \end{align}
    It remains only to upgrade the convergence of $Y^n$ in $\cS^2$. From the difference of \eqref{BSDE_v^n_alg} and \eqref{BSDE_v_valuefnc}, for every $\tau\in[t,T]$,
    \[
        \Delta Y^n_\tau
        =\int_\tau^T \Delta F^n_sds
        -\int_\tau^T \Delta Z^n_sdB_s
        -\int_\tau^T \Delta\widetilde{Z}^n_sd\widetilde{B}_s .
    \]
    Therefore, by Cauchy's inequality, Doob's inequality, and It\^o's isometry, for a constant $C>0$ independent of $n$,
    \begin{align}
        \bE^{\bQ}\bigg[\sup_{\tau\in[t,T]}|\Delta Y^n_\tau|^2\bigg]
        &\leq C\bE^{\bQ}\int_t^T |\Delta F^n_s|^2ds
        +C\bE^{\bQ}\int_t^T
        \Big(|\Delta Z^n_s|^2+|\Delta\widetilde{Z}^n_s|^2\Big)ds \nonumber\\
        &\leq C\bE^{\bQ}\int_t^T
        \Big(|\Delta\widetilde{Z}^{n-1}_s|^2
        +|\Delta\widetilde{Z}^{n}_s|^2
        +|\Delta Z^n_s|^2\Big)ds,
    \end{align}
    where the last step uses \eqref{Lipschitz_driver}. This  combined with \eqref{est:direct_iterated_contraction} yields the desired limit.
\end{proof}

%%%%%%%%%%%
%%%%%%%%%%%%%%
\subsection{Proof of Theorem \ref{thm:convergence_value_function} and Monotone Improvement}
Having established the required BSDE representations and convergence estimates, we are now in a position to prove the main convergence result, Theorem \ref{thm:convergence_value_function}.

\begin{proof}[Proof of Theorem \ref{thm:convergence_value_function}]
    Let $(Y,Z,\widetilde{Z})$ be the unique adapted solution to the nonlinear BSDE \eqref{BSDE_v_valuefnc} in Lemma \ref{lem:wellposed_BSDE_v_valuefnc}, 
    and let $(Y^n,Z^n,\widetilde{Z}^n)$ be the unique adapted solution to the iterative BSDE \eqref{BSDE_v^n_alg} in Lemma \ref{lem:wellposed_BSDE_vn}. 
    Proceeding exactly as in the proof of Theorem \ref{thm:main_convergence_bsde},  we have by \eqref{est:direct_iterated_contraction} that
    \begin{equation}
        |Y^n_{\tau}-Y_{\tau}|^2 \leq q^n\;\bE_{\cF_{\tau}}^{\bQ}\int_{\tau}^T e^{\lambda s} |\widetilde{Z}_s^{0}-\widetilde{Z}_s|^2ds.
    \end{equation}
    Taking $\tau=t$ and then taking expectation under $\bP$, we obtain
     \begin{align}
        \bE^{\bP}\bigg[  |Y^n_{t}-Y_t|^2\bigg]
        \leq  q^n\;\bE^{\bP}\left[ \bE_{\cF_{t}}^{\bQ}\int_{t}^T e^{\lambda s} |\widetilde{Z}_s^{0}-\widetilde{Z}_s|^2ds\right].
    \end{align}
  Using the representations
\[
Y_{\tau}^n=v^n(\tau,X^{t,x}_\tau),\qquad
Y_{\tau}=v(\tau,X^{t,x}_{\tau}), \quad \tau\in[t,T],
\]
and
\[
\widetilde Z_s^0=(\widetilde\sigma^\cT  D_xv^0)(s,X_s^{t,x}),
\qquad
\widetilde Z_s=(\widetilde\sigma^\cT  D_xv)(s,X_s^{t,x}), \quad s\in[t,T],
\]
we obtain for a.e. $x\in\bR^d$,
    \begin{align}\label{est_temp_errvalue_funct}
&\bE^{\bP}\bigg[ |v^n(t,x)-v(t,x)|^2\bigg]
\nonumber\\
&\leq  q^{n} 
\bE^{\bP}\left[
     \bE_{\cF_{t}}^{\bQ}\int_{t}^T e^{\lambda s} \Big|(\widetilde{\sigma}^\cT D_xv^0)(s,X_s^{t,x})
     -(\widetilde{\sigma}^\cT D_xv)(s,X_s^{t,x})\Big|^2 ds\right]\nonumber\\
    &\leq  2q^{n} \bE^{\bP}\left[ 
        \bE_{\cF_{t}}^{\bQ}\int_{t}^T 
        e^{\lambda s} \Big|\widetilde{\sigma} (s,X_s^{t,x})\Big|^2 
        \Big(\big|D_xv^0(s,X_s^{t,x})\big|^2+\big|D_xv(s,X_s^{t,x})\big|^2\Big) ds\right].
    \end{align}
By Assumption \ref{ass:continuity_bounded_integrability}, $\widetilde{\sigma}$ is uniformly bounded by $K$, $\bP$-almost surely. 
Recall in \eqref{value_fnc_gradient_bound}, whenever the corresponding spatial gradients exist, 
$$|D_xv(t,x)| + |D_xv^0(t,x)|\leq M, \quad\bP\text{-a.s.,}$$ 
where $M>0$ is a constant depending on $ K$, and $T$.
Since $\bP$ and $\bQ$ are equivalent, these bounds also hold $\bQ$-almost surely. Hence, from \eqref{est_temp_errvalue_funct},
 it follows straightforwardly that for each $t\in[0,T]$,
\begin{align*}
         \bE^{\bP}\bigg[|v^n(t,x)-v(t,x)|^2\bigg]
         \leq \frac{e^{\lambda T}}{\lambda}K^2M^2 q^n
         \leq C q^{n}, \quad \text{for a.e. } x\in\bR^d,
    \end{align*}
  where the constant $C>0$ depends on $\theta, \gamma, K$, and $T$.
\end{proof}
 
We conclude this section by establishing the monotone improvement property of the proposed policy iteration scheme. More precisely, we show that $v^n$, the approximating sequence of the value function, is monotone non-decreasing.
\begin{prop} \label{prop:monotonicity_value_function}
    Suppose that Assumptions \ref{ass:meas_Lipschitz_LinearGrowth}--\ref{ass:feedback_control_lipschitz} hold.  
    Let $(v^n(t,x),\psi^n(t,x))$ and $(v^{n+1}(t,x),\psi^{n+1}(t,x))$ denote the solutions of the 
    linear BSPDE \eqref{SHJB-Algorithm} for two consecutive iterations. 
    Then for each $t\in[0,T]$,
  $$v^n(t,x)\leq v^{n+1}(t,x),\quad \bP\otimes dx\text{-a.e. } (\omega, x)\in \Omega\times \bR^d.$$ 
 \end{prop}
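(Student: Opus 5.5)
The plan is the standard policy-improvement comparison. I will realize it at the level of BSDEs along a state process driven by the new policy $a^{n+1}$, and then transfer the resulting inequality back to the random fields.

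\textbf{Step 1: the sign of $v^{n+1}-v^n$ at the PDE level.} Write $\mathcal L u := tr\big[\frac12(\sigma\sigma^\cT+\widetilde\sigma\widetilde\sigma^\cT)D_x^2u\big]$. Set $\delta:=v^{n+1}-v^n$ and $\phi:=\psi^{n+1}-\psi^n$. Subtracting \eqref{SHJB-Algorithm} at steps $n$ and $n+1$ gives
\[
-d\delta=\Big[\mathcal L\delta+tr(D_x\phi\,\sigma^\cT)+(b^{a^{n+1}})^\cT D_x\delta+h\Big]dt-\phi\,dW_t,\qquad \delta(T,\cdot)=0,
\]
where
\[
h:=\big((b^{a^{n+1}})^\cT D_xv^n+f^{a^{n+1}}\big)-\big((b^{a^n})^\cT D_xv^n+f^{a^n}\big)\ \ge 0 .
\]
The sign of $h$ holds because $a^{n+1}$ maximizes $a\mapsto (b^a)^\cT D_xv^n+f^a$ pointwise (Step 2 of Algorithm~\ref{alg:SHJB_policy_iteration}). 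So $\delta$ solves a linear BSPDE with zero terminal value and a nonnegative source. The claim $\delta\ge0$ is therefore a comparison/maximum principle statement for linear BSPDEs.

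\textbf{Step 2: probabilistic representation of $\delta$.} I would prove $\delta\ge 0$ via the It\^o--Kunita--Wentzell formula (Lemma~\ref{Ito_kunita_formula}). Let $\hat X$ be the state process \eqref{SDE-controlledstate} driven by the feedback $\hat\alpha_s=a^{n+1}(s,\hat X_s)$ and started at $\hat X_t=x$. Lemma~\ref{Ito_kunita_formula} applied to $\delta$ along $\hat X$ gives
\[
d\delta(s,\hat X_s)=-h(s,\hat X_s)\,ds+(\phi+\sigma^\cT D_x\delta)(s,\hat X_s)\,dW_s+(\widetilde\sigma^\cT D_x\delta)(s,\hat X_s)\,d\widetilde W_s .
\]
The first-order drift terms cancel exactly because the control inside $\hat X$ is $a^{n+1}$, so no Girsanov change of measure is needed. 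Integrating from $t$ to $T$ and taking $\cF_t$-conditional expectation, the stochastic integrals vanish. This is justified since $D_x\delta$ is bounded up to $\widetilde\sigma$ and lies in $L^2$, $\phi\in\cL^2(H^{1,2})$, and the integrands are square integrable by the moment bound of Lemma~\ref{wellp:FSDE}. Hence
\[
\delta(t,x)=\bE\Big[\int_t^T h(s,\hat X_s)\,ds\,\Big|\,\cF_t\Big]\ \ge 0 .
\]
Since $\delta(t,x)$ is $\cF^W_t$-measurable, this gives $v^{n+1}(t,x)\ge v^n(t,x)$ a.s. for a.e.\ $x$. Equivalently, in the language of Proposition~\ref{prop:linearBSPDE2BSDE}, the BSDE driver difference is $h\ge0$ and the linear BSDE comparison theorem applies.

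\textbf{Main obstacle: admissibility of $a^{n+1}$.} We need $a^{n+1}$ to be admissible, i.e.\ Lipschitz in $x$, so that $\hat X$ exists by Lemma~\ref{wellp:FSDE}. Indeed, $a^{n+1}(s,x)=\boldsymbol\alpha(s,x,(\widetilde\sigma^\cT D_xv^n)(s,x))$, and $D_xv^n$ need not be Lipschitz. I would try first to argue exactly as the paper does for $\alpha^*$, namely by assuming (in the spirit of Assumption~\ref{ass:feedback_control_lipschitz}) that the iterates' feedbacks are admissible.

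If that is unavailable, a fallback avoids the SDE altogether and proves the maximum principle for the linear BSPDE directly. Apply the It\^o formula to $\|\delta^-(t)\|^2$ in $L^2(\bR^d)$, using super-parabolicity (Assumption~\ref{ass:nondegeneracy/parabolicity}) to absorb the $D_x\phi\,\sigma^\cT$ and first-order terms, as in the standard $L^2$ maximum principle for BSPDEs \cite{qiu2012maximum}. Together with $h\ge0$ and Gronwall's inequality, this yields $\bE\|\delta^-(t)\|^2=0$ for each $t$. The delicate point there is the coercivity estimate: the $\sigma$-part of the second-order term combines with $\phi$ to give a nonnegative quantity, and $\widetilde\sigma$ provides strict parabolicity.
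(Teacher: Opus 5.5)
\textbf{The primary route has a gap.} Step~2 along $\hat X$ does not work under the stated hypotheses. The feedback $a^{n+1}(s,x)=\boldsymbol{\alpha}\big(s,x,(\widetilde\sigma^{\cT}D_xv^n)(s,x)\big)$ is built from $D_xv^n$, which is only an $H^{1,2}$-valued measurable field, not a Lipschitz one. So neither Lemma~\ref{wellp:FSDE} nor any stated assumption guarantees that $\hat X$ exists as a strong solution, i.e.\ that $a^{n+1}\in\cA$. Assumption~\ref{ass:feedback_control_lipschitz} concerns $\alpha^*$ only, and postulating admissibility of every iterate would prove a weaker proposition.

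The idea you are missing is that no process driven by $a^{n+1}$ is needed. The paper keeps the single admissible state $X$ driven by $\alpha^*$; any admissible reference control, even the constant $\bar a_0$, would serve for this comparison. It then pushes the drift mismatch into the measure. Under $\bQ$ from Proposition~\ref{change_measure}, $Y^n=v^n(\cdot,X)$ and $Y^{n+1}=v^{n+1}(\cdot,X)$ solve \eqref{BSDE_v^n_alg} with drivers $F_s(\widetilde Z^{n-1}_s,\cdot)$ and $F_s(\widetilde Z^{n}_s,\cdot)$. The difference $F_s(\widetilde Z^{n}_s,\widetilde Z^{n}_s)-F_s(\widetilde Z^{n-1}_s,\widetilde Z^{n}_s)$ is exactly your $h(s,X_s)\ge0$. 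The BSDE comparison theorem then gives $Y^n\le Y^{n+1}$, and one evaluates at $s=t$ and uses $\bP\sim\bQ$.

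The linearization inside the comparison theorem is a second bounded Girsanov change with kernel $(\widetilde\sigma^{+}b^{a^{n+1}})(s,X_s)$. Under it, $X$ becomes a \emph{weak} solution of your $a^{n+1}$-driven equation, which is why only weak existence is ever used. So your aside ``equivalently, in the language of Proposition~\ref{prop:linearBSPDE2BSDE}\ldots'' is precisely the paper's proof. It is not equivalent to the $\hat X$ argument in its hypotheses: it already removes the obstacle you flag.

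\textbf{Your fallback is a correct, genuinely different proof.} Let $\delta=v^{n+1}-v^n$ and $\phi=\psi^{n+1}-\psi^n$. The argument runs as follows:
\begin{itemize}
\item apply It\^o's formula to $\|\delta^-(t)\|^2$, using $\delta^-(T)=0$;
\item integrate by parts, which is legitimate since $\sigma,\widetilde\sigma\in H^{2,\infty}$ and $(\delta,\phi)$ has strong-solution regularity;
\item the $\sigma\sigma^{\cT}$ part of the second-order term, the $D_x\phi\,\sigma^{\cT}$ term and the It\^o correction combine into the nonpositive square $-|\sigma^{\cT}D_x\delta^- -\mathbf{1}_{\{\delta<0\}}\phi|^2$;
\item the leftover terms involving $D_x\sigma$ and $b^{a^{n+1}}$ are absorbed by the $-\gamma\|D_x\delta^-\|^2$ supplied by $\widetilde\sigma$;
\item the source contributes $-2\langle\delta^-,h\rangle\le 0$;
\item Gronwall's inequality then gives $\bE\|\delta^-(t)\|^2=0$.
\end{itemize}

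This route is purely analytic. It needs no SDE and no change of measure, and uses neither Assumption~\ref{ass:control_lipschitz_bound} nor Assumption~\ref{ass:feedback_control_lipschitz}; $b^{a^{n+1}}$ may be merely bounded and measurable. It also yields the $\bP\otimes dx$-a.e.\ conclusion directly. The paper's route is shorter given the BSDE machinery already built for Theorem~\ref{thm:main_convergence_bsde}, since it reuses Proposition~\ref{prop:linearBSPDE2BSDE} and a textbook comparison theorem. However, it relies on the admissibility of $\alpha^*$ to have a reference state process at all.
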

\begin{proof}  Consider the BSDE \eqref{BSDE_v^n_alg} at two consecutive iterations:
    \begin{align}
        Y^n_t &= G(X_T) + \int_t^T F_s(\widetilde{Z}^{n-1}_s, \widetilde{Z}^n_s )ds - \int_t^T Z^n_s dB_s - \int_t^T \widetilde{Z}^n_s d\widetilde{B}_s, \label{BSDE_Y_n} \\
        Y^{n+1}_t &= G(X_T) + \int_t^T F_s(\widetilde{Z}^{n}_s, \widetilde{Z}^{n+1}_s )ds - \int_t^T Z^{n+1}_s dB_s - \int_t^T \widetilde{Z}^{n+1}_s d\widetilde{B}_s. \label{BSDE_Y_n+1}
    \end{align}
    For every $s\in[t,T]$ define the functions 
    \begin{equation*}
        \phi^1_s(z) := F_s(\widetilde{Z}_s^{n-1},z) \quad\text{and}\quad \phi^2_s(z):= F_s(\widetilde{Z}^n_s, z), \qquad z\in \bR^m,
    \end{equation*}
    so that the drivers of BSDEs \eqref{BSDE_Y_n} and \eqref{BSDE_Y_n+1} differ only through the first argument of $F$. With these definitions, BSDEs \eqref{BSDE_Y_n} and \eqref{BSDE_Y_n+1} become 
 \begin{align}
        Y^n_t &= G(X_T) + \int_t^T \phi^1_s( \widetilde{Z}^n_s )ds - \int_t^T Z^n_s dB_s - \int_t^T \widetilde{Z}^n_s d\widetilde{B}_s, \label{BSDE_Y_n_new} \\
        Y^{n+1}_t &= G(X_T) + \int_t^T \phi^2_s( \widetilde{Z}^{n+1}_s )ds - \int_t^T Z^{n+1}_s dB_s - \int_t^T \widetilde{Z}^{n+1}_s d\widetilde{B}_s. \label{BSDE_Y_n+1_new}
    \end{align}
    Moreover, for every $s\in[t,T]$,
    \begin{align}
        \phi^2_s(\widetilde{Z}^n_s) 
        = F_s(\widetilde{Z}^{n}_s, \widetilde{Z}^n_s) 
        & = \left(\widetilde{\sigma}^{+}b^{\boldsymbol{\alpha}(s,X_s, \widetilde{Z}^n_s)}\right)^{\cT}(s,X_s)\widetilde{Z}^n_s + f^{\boldsymbol{\alpha}(s,X_s, \widetilde{Z}^n_s)}(s,X_s)\nonumber\\
        % &  = \left(b^{a^{n+1}}D_xv^n\right)(s,X_s) + f^{a^{n+1}}(s,X_s)\nonumber\\
        & = \max_{a\in A} \Big(\left( (b^a)^\cT D_xv^n\right)(s,X_s)+f^a(s,X_s) \Big) \nonumber\\
        & \geq \left( (b^{a^{n}})^\cT D_xv^n\right)(s,X_s)+f^{a^n}(s,X_s) \nonumber\\
        & =F_s(\widetilde{Z}^{n-1}_s, \widetilde{Z}^n_s)\nonumber\\
        &=\phi_s^1(\widetilde{Z}^n_s).
    \end{align}
Since
\[
\phi_s^2(\widetilde Z_s^n)\ge
\phi_s^1(\widetilde Z_s^n),
\qquad s\in[t,T],\qquad \bQ\text{-a.s.,}
\] and
\(\phi^2(\widetilde Z^n)\in\cL_{\cP}^2(\bQ,\bR)\) with $\phi^2_s(\cdot)$ being Lipschitz continuous, the standard comparison theorem for BSDEs \cite[Theorem 6.2.2]{HPham2009ContstimeStoControlAndOptimizationFinancialAppl} implies that
\[
Y_s^n\le Y_s^{n+1},
\qquad s\in[t,T],\qquad \bQ\text{-a.s.}
\]
Recalling the relation $Y_s^n=v^n(s,X_s^{t,x})$ and $Y_s^{n+1}=v^{n+1}(s,X_s^{t,x}),$
we obtain
\[
v^n(s,X_s^{t,x})
\le
v^{n+1}(s,X_s^{t,x}),
\qquad
s\in[t,T],\qquad \bQ\text{-a.s.}
\]
Set $s=t$. Since the initial point \((t,x)\in[0,T]\times\mathbb R^d\) was arbitrary and \(\bP\) and \(\bQ\) are equivalent probability measures, it follows that for each $t\in [0,T]$,
\[
v^n(t,x)\le v^{n+1}(t,x),\qquad 
\bP\otimes dx\text{-a.e. } (\omega, x)\in \Omega\times \bR^d.
\]  
\end{proof}

\section{Conclusion}\label{sec:conclusion}

This paper develops and analyzes a policy-iteration scheme for non-Markovian stochastic optimal control problems through the associated SHJB equation, formulated as a semilinear BSPDE. Under appropriate regularity assumptions, the solution of the SHJB equation is identified with the value function of the underlying control problem, so that its approximation provides a natural route to solving the control problem itself.

To address the nonlinearity of the SHJB equation, we introduced an iterative policy-iteration framework in which the original semilinear BSPDE is replaced by a sequence of linear BSPDEs. We derive BSDE representations for both the semilinear equation and its linearized counterparts, and established a priori estimates for the solutions of such BSDEs.

The main result shows that the resulting sequence of approximations converges to the value function in the mean-square sense at an exponential rate. We also proved a monotone policy-improvement property, namely, that the sequence of iterates is nondecreasing when evaluated along the corresponding controlled state processes. Together, these results provide a rigorous theoretical foundation for the proposed policy-iteration scheme.

The proposed framework provides the first policy-iteration methodology for non-Markovian stochastic control problems governed by SHJB equations. It also provides a tractable approach to the approximation of semilinear SHJB equations. Since each iteration requires the solution of only a linear BSPDE, the framework is compatible with a broad range of existing numerical methods based on BSDE representations, including Monte Carlo regression, operator-splitting techniques, and deep learning-based approaches.

The present work has focused primarily on the theoretical properties of the proposed scheme. Several directions remain for future investigation, including the design and implementation of efficient numerical algorithms, the analysis of their stability and computational performance, particularly in high-dimensional settings, and the extension of the methodology to broader classes of stochastic control problems and fully nonlinear SHJB equations.

{\footnotesize
% Redefine thebibliography to hook in a tighter item separation
\let\oldthebibliography\thebibliography
\renewcommand\thebibliography[1]{%
    \oldthebibliography{#1}%
    \setlength{\itemsep}{0pt}%  <-- Reduces space between items
    \setlength{\parskip}{0pt}%  <-- Reduces paragraph space
}
\bibliographystyle{siam}
	\bibliography{reading_list}
    }
\end{document}